\def\Ac{{\cal A}}
\def\Cc{{\cal C}}
\def\Fc{{\cal F}}
\def\Hc{{\cal H}}
\def\Sc{{\cal S}}
\def \P{\mathbb{P}}
\def \R{\mathbb{R}}
\def\Esp#1{\mathbb{E}\left[#1\right]}
\newcommand{\rset}{\mathbb{R}}
\newcommand{\ep}{\varepsilon}
\newcommand{\ind}{\mathbf{1}}
\newcommand{\fl}{\longrightarrow}
\newcommand{\e}{\mathbb{E}}
\newcommand{\p}{\mathbb{P}}
\newcommand{\lp}{\mathrm{L}}
\newcommand{\m}{\mathcal} 
\newcommand{\dotafter}[1]{#1.}
\titleformat{\section}[hang]
{\normalfont\large\bfseries}{\thesection.}{.5em}{\dotafter}[]
\titleformat{\subsection}[hang]
{\normalfont\bfseries}{\thesubsection.}{.4em}{\dotafter}[]
\titleformat{\paragraph}[hang]{\normalfont\bfseries}{\theparagraph.}{.4em}{\dotafter}[]
\theoremstyle{plain}
\newtheorem{thm}{Theorem}
\newtheorem{lemme}[thm]{Lemma}
\newtheorem{prop}[thm]{Proposition}
\newtheorem{cor}[thm]{Corollary}
\theoremstyle{definition}
\newtheorem*{df}{Definition}
\theoremstyle{remark}
\newtheorem{rem}{Remark}
\definecolor{darkred}{rgb}{0.8,0,0}
\definecolor{darkblue}{rgb}{0,0,0.7}
\definecolor{darkgreen}{rgb}{0,0.4,0}
\title{\bf BSDEs with mean reflection}
\author{Philippe Briand\thanks{Laboratoire de Math\'ematiques CNRS UMR 5127,
Universit\'e de Savoie, Campus Scientifique, 73376 Le Bourget du Lac, France
(philippe.briand@univ-savoie.fr)} \and 
Romuald Elie\thanks{LAMA UMR CNRS 8050, Universit\'e Paris-Est $\&$ Projet MathRisk ENPC-INRIA-UMLV,
 Champs-sur-Marne, France
(romuald.elie@univ-mlv.fr). Research partially supported by the ANR grant LIQUIRISK and the Chair Finance and Sustainable Development.} 
\and Ying Hu\thanks{IRMAR UMR CNRS 6625, Université Rennes 1, Campus de Beaulieu, 35042 Rennes Cedex, France (ying.hu@univ-rennes1.fr), partially supported by Lebesgue center of mathematics ("Investissements d'avenir"
program - ANR-11-LABX-0020-01 and by ANR-15-CE05-0024-02.}}
\begin{document}
\maketitle
\begin{abstract}

 In this paper, we study a new type of BSDE, where the distribution of the $Y$-component of the solution is required to satisfy an additional constraint, written in terms of the expectation of a loss function. 
 This constraint is imposed at any deterministic time $t$ and is typically weaker than the classical pointwise one associated to reflected BSDEs. Focusing on solutions $(Y,Z,K)$ with deterministic $K$, we obtain the well-posedness of such equation, in the presence of a natural Skorokhod type condition. Such condition indeed ensures the minimality of the enhanced solution, under an additional structural condition on the driver. 
%
%
%
%
 Our results extend to the more general framework where the constraint is written in terms of a static risk measure on $Y$. In particular, we provide an application to the super hedging of claims under running risk management constraint. 

\end{abstract}

\section{Introduction}

 Backward Stochastic Differential Equations (BSDEs) have been introduced by Pardoux and Peng \cite{parpen90} and share a strong connection with stochastic control problems. Solving a BSDE typically consists in the obtention of an adapted couple process $(Y,Z)$ with the following dynamics:
\begin{eqnarray*}
Y_t &=& \xi + \int_t^T f(s,Y_s,Z_s) ds - \int_t^T Z_s \cdot dB_s\;, \qquad 0\le t \le T\;.
\end{eqnarray*}

In their seminal paper, Pardoux and Peng  provide the existence of a unique solution $(Y,Z)$ to this equation for a given square integrable terminal condition $\xi$ and a Lipschitz random driver $f$. Since then, many extensions have been derived in several directions. The regularity of the driver can for example be weakened. The underlying dynamics can be fairly more complex, via for example the addition of jumps. These extensions allow in particular to provide representation of solutions to a large class of stochastic control problems, and to tackle several meaningful applications in mathematical finance.\\

 More interestingly, the consideration of additional conditions on the stochastic control problems of interest naturally led to the consideration of constrained BSDEs. In such a case, the solution of a constrained BSDE contains an additional adapted increasing process $K$, such that $(Y,Z,K)$ satisfies
\begin{eqnarray}\label{Constrained_BSDE}
Y_t &=& \xi + \int_t^T f(s,Y_s,Z_s) ds - \int_t^T Z_s \cdot dB_s + K_T - K_t \;, \qquad 0\le t \le T,
\end{eqnarray}
together with a chosen constraint on the solution. The process $K$ interprets as the extra cost on the value process $Y$, due to the additional constraint. In such a framework, this equation admits an infinite number of solutions, as the roles of $Y$ and $K$ are too closely connected. The underlying stochastic control problem of interest typically indicates that one should look for the minimal solution (in terms of $Y$) of such equation. Motivated by optimal stopping or related obstacle problems, El Karoui et al. \cite{EKP} introduced the notion of reflected BSDE, where the constraint is of the form 
\begin{eqnarray*}\label{Pointwise constraint}
Y_t &\ge&  L_t \;, \qquad 0\le t \le T.
\end{eqnarray*}
The obstacle process $L$ is a lower bound on the solution $Y$ and interprets as the reward payoff, if one chooses to stop immediately. It is worth noticing that the minimal solution $(Y,Z,K)$ is fully characterized by the following so-called Skorokhod condition 
\begin{eqnarray*}
\int_0^T (Y_t -L_t)^+ dK_t &=& 0.
\end{eqnarray*}
This condition intuitively indicates that the process $K$ is only allowed to push the value process $Y$ whenever the constraint is binding.\\

 The class of constrained BSDEs has been significantly enlarged in the recent literature. The resolution of zero sum Dynkin game problems led Cvitanic and Karatzas \cite{CK} to the study of doubly reflected BSDE, where the process $Y$ lies in between two processes. 
 Considering super-hedging problems where the admissible portfolios are restricted to belong to a convex set $\textbf{C}$ (e.g. $\textbf{C}=\rset^+$ for no short sell constraints), Buckdahn and Hu \cite{BuHu1,BuHu2} and Cvitanic et al. \cite{CKS} studied the well posedness of BSDE (\ref{Constrained_BSDE}) together with the constraint: $Z_t\in\textbf{C}$, for $t\in[0,T]$. More generally, Peng and Xu \cite{PenXu} considered pointwise constraints of the form $\varphi(t,Y_t,Z_t)\ge 0$, where $\varphi$ is non-decreasing in $y$. The study of optimal switching problems \cite{HamJea,HamZha,HuTan,CEK} led to the consideration of multidimensional  systems of BSDEs with oblique reflections.\\ 
 In contrast to the previously mentioned pointwise constraints on the solution, Bouchard et al. \cite{BER} introduced the notion of BSDE with weak terminal condition. In their framework, the terminal condition is replaced by a constraint on the distribution of the random variable $Y_T$ and would typically rewrite
\begin{eqnarray*}
 \e[\ell(Y_T-\xi)] \ge 0.
\end{eqnarray*}
The term $\ell(X_T-\xi)$ identifies to the quantification of a loss depending on the distance between $Y_T$ and the target $\xi$. This type of BSDE relates in  particular to quantile hedging or related controlled loss control problems.\\

The purpose of this paper is to determine the impact of a dynamic version of such type of constraint, by studying the BSDE (\ref{Constrained_BSDE}) together with a running constraint in expectation of the form
\begin{eqnarray}\label{runningconstraint}
 \e[\ell(t,Y_t)] \ge 0\;, \qquad 0\le t \le T,
\end{eqnarray}
where $(\ell(t,.))_{0\le t\le T}$ is a collection of non decreasing possibly random functions. It is worth noticing that the previous running constraint is only imposed on deterministic times $t\in[0,T]$. In the spirit of the above mentioned  Skorokhod condition for reflected BSDEs, we look towards so-called flat solutions, i.e. satisfying the extra condition
\begin{eqnarray} \label{skorokhodcondition}
\int_0^T \e[\ell(t,Y_t)]  dK_t &=& 0. 
\end{eqnarray}
 Whenever $K$ is allowed to be random, we observe that the construction of a minimal continuous (in $Y$) solution to the system (\ref{Constrained_BSDE})-(\ref{runningconstraint}) is not possible in general. For such reason, we focus on the derivation of solutions $(Y,Z,K)$ with deterministic $K$ component. Whenever $\ell$ is deterministic linear, we provide an explicit construction of the unique flat solution to the system (\ref{Constrained_BSDE})-(\ref{runningconstraint})-(\ref{skorokhodcondition}). For general loss functions, we are also able  to derive the well-posedness of the system (\ref{Constrained_BSDE})-(\ref{runningconstraint})-(\ref{skorokhodcondition}), under mild assumptions satisfied for example whenever $\ell$ is bi-Lipschitz. Besides, restricting to drivers with any dependence in $z$ but deterministic linear dependence in $y$, we verify that the condition (\ref{skorokhodcondition}) ensures the minimality in $Y$ of the enhanced solution, among all considered solutions of the BSDE (\ref{Constrained_BSDE}) with mean reflexion (\ref{runningconstraint}).\\
 
 In terms of applications, it is worth noticing that the constraint (\ref{runningconstraint}) can easily be replaced by a more general version of the form 
\begin{eqnarray}\label{risk_measure_constraint}
 \rho(t,Y_t) \le q_t\;, \qquad 0\le t \le T\;,
\end{eqnarray}
 where $(\rho(t,.))_t$ is a time indexed collection of static risk measures, and $(q_t)_t$ are associated benchmark levels. This framework is in fact the main motivation of this paper, but we chose to present our main argumentation within the constraint (\ref{runningconstraint}) for sake of clarity and simplicity. We detail in particular in the last section of the paper an application to the super-replication of claims, restricting to investment portfolio $Y$ satisfying risk management constraint of the form \eqref{risk_measure_constraint}.\\

The paper is organized as follows: Section \ref{sec:setup} presents the problem of interest, clarifies the assumptions and discusses the main results of the paper.  In Section \ref{sec:llin}, we build the unique solution to the system (\ref{Constrained_BSDE})-(\ref{runningconstraint})-(\ref{skorokhodcondition}) whenever $\ell$ is linear and deterministic. The general case is treated in Section \ref{sec:general_case}, where we derive the well-posedness of the system (\ref{Constrained_BSDE})-(\ref{runningconstraint})-(\ref{skorokhodcondition}). The minimality  of the enhanced solution is discussed in Section \ref{sec:minimal}, whereas the mathematical finance application is given in Section \ref{sec:application}.\\

\paragraph{Notations} 

Throughout this paper, we are given a finite horizon $T$ and a complete probability space $(\Omega,\Fc,\P)$ endowed with a $d$-dimensional standard Brownian motion  $B=(B_t)_{0\geq t\leq T}$. We will work with the usual augmented filtration of $B$, $\{ \m F_t\}_{0\leq t\leq T}$. Any element $x \in \R^d$ will be identified to a column vector with $i$-th component $x^i$ and Euclidian norm $|x|$. $\Cc_T$ denotes the set $C([0,T],\R)$ of continuous functions from $[0,T]$ to $\R$. For a given set of parameters $\alpha$, $C(\alpha)$ will denote a constant only depending on these parameters, and which may change from line to line.
Finally, we classically denote by:\\[-5mm]
  \begin{itemize}
\setlength\itemsep{0pt}
\setlength\leftmargin{\parindent}
\setlength\itemindent{-\parindent}
\item $L^2(\Fc_t)$ the set of real valued $\Fc_t$-measurable square integrable random variables, for any $t\in[0,T]$.
\item $\Sc^2$ the set of real valued $\Fc$-adapted  continuous 
processes $Y$ on $[0,T]$ such that\\
    $
    \|Y\|_{\Sc^2} := \Esp{\sup_{0\le r \le T} |Y_r|^2}^{\frac{1}{2}} <\infty
    $;
 \item $\Hc^2$ the set of predictable $\R^d$-valued processes $Z$ s.t.
    $
    \!\|Z\|_{\Hc^2}\! := \! \Esp{\int_0^T |Z_r|^2 dr}^\frac{1}{2}\!\!\!\!~<~\!\!\infty
    $;
\item ${\Ac^2}$ is the  closed subset of $\Sc^2$ consisting of
nondecreasing  processes $K$ $=$ $(K_t)_{0\leq t\leq T}$ with $K_0$
$=$ $0$;
\item ${\Ac^2_D}$ the subset of deterministic elements of ${\Ac^2}$.
%
\end{itemize}

\section{Problem set up}\label{sec:setup}

\subsection{Presentation of BSDEs with mean reflexion}

The main purpose of this paper is to construct solutions $(Y,Z,K)
$ to the following BSDE 
\begin{gather}
	Y_t  =\xi+\int_t^T f(s,Y_s,Z_s)\, ds - \int_t^T Z_s\cdot dB_s + K_T-K_t,\quad 0\leq t\leq T, \label{eq:main_dyn}\\
	\e[\ell(t,Y_t)] \geq 0, \quad 0\leq t \leq T, \label{eq:main_constraint}
\end{gather}
	where the second equation is a running constraint in expectation on the component $Y$ of the solution. In opposition to classical reflected BSDE where \eqref{eq:main_constraint} would typically be a pointwise constraint, the constraint considered here concerns the distribution of the $Y$-component. We 
	pin this new type of constrained equations as \textit{BSDEs with mean reflexion}. 

	The non-decreasing function $\ell$ interprets as a loss function and typical examples of interest  are
	\begin{itemize}
	 \item $\ell(t,x)=x-u_t$ where $u$ is a deterministic continuous benchmark, that the process $Y$ is required to beat in expectation;
	 \item $\ell(t,x) = \ind_{x \geq u_t} - v_t$ (or any smoothed equivalent), so that the process $Y$ is now required to beat deterministic continuous benchmark $u$ with a probability greater than $v_t$, for any time $t$;
	 \item $\ell(t,x) = U(x,\xi_t) -u_t$ where $U$ is a concave utility function, $(\xi_t)_t$ is a running random benchmark of interest and $(u_t)_t$ a given deterministic confidence level.
	 \end{itemize}

	Whenever $\ell$ is a strictly increasing function, the corresponding classical reflected BSDE  is characterized by the dynamics \eqref{eq:main_dyn} together with the stronger pointwise constraint 
\begin{eqnarray*}
	\ell(t,Y_t) \geq 0, \quad 0\leq t \leq T.
\end{eqnarray*}
	In such a case, the $Y$-component of the solution to the BSDE is reflected on the boundary process $([\ell(t,.)]^{-1}(0))$. Observe that our constrained BSDE of interest weakens the condition imposed on $Y$, by only constraining its distribution.

\begin{rem}
	Observe that the condition \eqref{eq:main_constraint} is only written on the deterministic dates of $[0,T]$, and not on all the possible stopping times smaller than $T$. In our framework, considering a constraint on all stopping times would strongly strengthen the constraint of interest. On the contrary, both type of pointwise conditions are by construction equivalent for classical reflected BSDEs.
\end{rem}

\subsection{Assumptions on the coefficients}

 The parameters of the BSDE with mean reflection are the terminal condition $\xi$, the driver $f$ as well as the loss function $\ell$. These parameters are supposed to satisfy the following standard running assumptions:
 \begin{itemize}
 \item[($H_f$)] The driver $f:\Omega\times[0,T]\times\rset\times\rset^d \fl \rset$ is a measurable map with respect to $\m P\times \m B(\rset)\times\m B\left(\rset^d\right)$ and $\m B(\rset)$, $\m P$ being the sigma algebra of progressive sets of $\Omega\times[0,T]$, and there exists $\lambda\geq 0$ such that, $\p$-a.s., for all $t\in[0,T]$,
	\begin{equation*}
		\forall y,p,z,q\qquad \left| f(t,y,z) - f(t,p,q)\right| \leq \lambda \left( |y-p| + |z-q|\right),
	\end{equation*}
and
$$ \e\left[ \int_0^T | f(t,0,0) | ^2\, dt\right] <+\infty.$$
\item[($H_\xi$)] The terminal condition $\xi$ is a square-integrable ${\cal F}_T$-measurable random variable such that
$$\e[\ell(T,\xi)] \geq 0.$$
\item[($H_\ell$)] The loss function $\ell : \Omega\times[0,T]\times\rset \fl \rset$ is a measurable map with respect to $\m F_T\times\m B([0,T])\times \m B(\rset)$ and there exists $C\geq 0$ such that, $\p$-a.s.,
\begin{enumerate}
	\item $(t,y)\longmapsto \ell(t,y)$ is continuous,
	\item $\forall t\in[0,T]$, $y\longmapsto \ell(t,y)$ is strictly increasing,
	\item $\forall t\in[0,T]$, $\e\left[\ell(t,\infty)\right]  >0$,
	\item $\forall t\in[0,T]$, $\forall y\in\rset$, $|\ell(t,y)| \leq C(1+|y|)$.
\end{enumerate} 
\end{itemize}

 \begin{rem}
	We chose to work in this paper under that seminal Lipschitz and square integrability assumptions on the driver and terminal condition. 
	 We restrict here to this simple framework, in order to decrease the amount of technical details and emphasize the novelty induced of the additional constraint \eqref{eq:main_constraint}.
 \end{rem}
 
\begin{rem}
	Observe that Condition ($H_\xi$) ensures  that the constraint is automatically satisfied at maturity. This condition implies that no a priori facelift procedure is required on the terminal payoff $\xi$. 
\end{rem}

\subsection{Definition of solution, main results and discussion}

We now turn to the definition of a solution to the BSDE \eqref{eq:main_dyn} with mean reflexion \eqref{eq:main_constraint} of interest.
 
 	\begin{df}
		A square integrable solution to the BSDE~\eqref{eq:main_dyn} with mean reflection~\eqref{eq:main_constraint}  is a triple of processes $(Y,Z,K)\in\Sc^2\times\Hc^2\times\Ac^2$ satisfying \eqref{eq:main_dyn} together with the constraint \eqref{eq:main_constraint}. 
		A solution is said to be \textit{flat} if moreover $K$ increases only when expected, i.e. when we have 
	\begin{equation}\label{main_flat}
		\int_0^T \e[\ell(t,Y_t)] \, dK_t = 0.
	\end{equation}
			By a \textit{deterministic solution}, we mean a solution for which the process $K$ is deterministic., i.e. $K\in\Ac^2_D$.
%
%
	\end{df} 
	
 

As detailed in Remark  \ref{rem_uniqueness} below, we observe that allowing $K$ to be random leads to the existence of multiple flat solutions. We even verify that it may induce the non-existence of minimal solution  for the BSDE \eqref{eq:main_dyn} with mean reflection \eqref{eq:main_constraint}, see the example provided at the end of Section \ref{sec:minimal}. This is why we chose here to restrict to the consideration of so-called \textit{deterministic solutions}, i.e. solutions $(Y,Z,K)$ with deterministic compensator $K$.

 In particular, focusing on deterministic solutions, we verify that the flatness condition \eqref{main_flat} can directly imply the minimality property of the solution beyond all the deterministic ones. This is in particular the case for drivers with deterministic linear dependence in $y$, see Condition \eqref{struct_f}.\\

The main result of this paper is the existence and uniqueness of deterministic flat solution to the BSDE \eqref{eq:main_dyn} with mean reflection \eqref{eq:main_constraint}.
This is first achieved  for the particular case of linear loss function $\ell$, see Proposition \ref{prop:existence1_linear} and Theorem \ref{thm existence llinear} in Section \ref{sec:llin}. The line of proof follows a constructive approach when the driver does not depend on $Y$ and $Z$, together with a contraction property in order to tackle any Lipschitz driver function. An alternative approach via penalization is also provided in Section \ref{subsec:penalize}. 
When the driver is not linear, the well posedness of the system \eqref{eq:main_dyn}-\eqref{eq:main_constraint}-\eqref{main_flat} is also established, under an additional assumption on the loss function, denoted ($H_L$) below, see Proposition \ref{prop:existence_uniqueness_2} and Theorem \ref{thm: main}  in Section \ref{sec:general_case}.\\

%
%
%
%
%
%
%
%
%
%

 	In a similar fashion, we explain in Section~\ref{sec:application} below how the constraint in expectation  \eqref{eq:main_constraint} can be replaced by a constraint of the form $\rho(\cdot,Y_\cdot)\le q_\cdot$, where $(\rho(t,\cdot))_t$ is a collection of static risk measures computed at time $0$, and $q$ is a collection  of time-indexed benchmarks. In particular, solving this equation allows for example to represent the super-hedging price of a claim $\xi$, whenever any admissible portfolios require to satisfy at any date $t$ a running risk management constraint written in terms of risk measures. \\
	
	\begin{rem}
		Since the constraint \eqref{eq:main_constraint} concerns the distribution of the solution to the BSDE, it is tempting to understand the possible connection between such type of BSDE and corresponding constrained McKean Vlasov BSDEs. This topic seems promising in particular for the mean field game literature and is left for further research.
	\end{rem}

\subsection{A priori estimate}

 Let us conclude this section by providing a usefull a priori estimate on any solution to the BSDE \eqref{eq:main_dyn}-\eqref{eq:main_constraint} of interest. 
%

\begin{lemme}\label{en:S2}
	Let $(Y,Z,K)$ be a square integrable solution to the BSDE \eqref{eq:main_dyn} with mean reflection \eqref{eq:main_constraint}. Then $Y$ satisfies the following 
	\begin{equation*}
		\e\left[ \sup_{0\leq t \leq T} |Y_t|^2\right] \leq C(\lambda, T) \, \e\left[ |Y_0|^2 + K_T^2 + \int_0^T |f(s,0,0)|^2 ds + \int_0^T |Z_s|^2 ds\right].
	\end{equation*}
\end{lemme}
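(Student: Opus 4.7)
The plan is to rewrite the equation in forward form and apply standard BDG/Gronwall machinery. Writing \eqref{eq:main_dyn} from $0$ to $t$ gives
\[
	Y_t = Y_0 - \int_0^t f(s,Y_s,Z_s)\,ds + \int_0^t Z_s\cdot dB_s - K_t,
\]
so I would first apply the triangle inequality, square both sides (using $(a+b+c+d)^2\le 4(a^2+b^2+c^2+d^2)$), take $\sup_{u\le t}$, and then expectation. The martingale term is handled by the Burkholder--Davis--Gundy inequality, yielding a control by $C\,\e[\int_0^t |Z_s|^2\,ds]$. Monotonicity of $K$ gives $K_u^2\le K_T^2$ uniformly in $u\le t$.

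Next I would control the driver integral via Cauchy--Schwarz and the Lipschitz assumption $(H_f)$, namely
\[
	\Bigl(\int_0^t |f(s,Y_s,Z_s)|\,ds\Bigr)^2 \le 3T \int_0^t \Bigl(|f(s,0,0)|^2 + \lambda^2 |Y_s|^2 + \lambda^2 |Z_s|^2\Bigr)ds.
\]
Setting $\phi(t) := \e[\sup_{0\le u\le t}|Y_u|^2]$, and combining the above estimates, I would arrive at an inequality of the form
\[
	\phi(t) \le C_1 + C_2 \int_0^t \phi(s)\,ds, \qquad 0\le t \le T,
\]
where
\[
	C_1 = C(\lambda,T)\,\e\Bigl[|Y_0|^2 + K_T^2 + \int_0^T |f(s,0,0)|^2\,ds + \int_0^T |Z_s|^2\,ds\Bigr], \qquad C_2 = C(\lambda,T).
\]
The key subtlety is that one cannot simply absorb $\e\int_0^T|Y_s|^2\,ds$ into the left-hand side unless $T$ (or $\lambda$) is small, which would artificially restrict the estimate; this is precisely why one bounds $\e|Y_s|^2 \le \phi(s)$ pointwise in $s$ and invokes Gronwall's lemma on $[0,T]$ rather than a direct absorption argument. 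Applying Gronwall then delivers $\phi(T)\le C_1 e^{C_2 T}$, which is the claimed estimate up to renaming the constant. Note that $\phi$ is a priori finite since $(Y,Z,K)\in\Sc^2\times\Hc^2\times\Ac^2$, so the Gronwall step is rigorous; this finiteness also justifies localization of the stochastic integral if needed to apply BDG cleanly.
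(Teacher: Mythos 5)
Your proof is correct and follows essentially the same route as the paper: write the equation in forward form, bound $K_t$ by $K_T$, control the stochastic integral via Burkholder--Davis--Gundy, and close with Gronwall. The only cosmetic difference is that the paper applies Gronwall pathwise to $|Y_t|$ before squaring and taking expectations, whereas you square first and apply Gronwall to $\phi(t)=\e[\sup_{u\le t}|Y_u|^2]$ (correctly noting the a priori finiteness from $Y\in\Sc^2$ that makes this step rigorous).
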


\begin{proof}
	By construction, we have, 
	\begin{equation*}
		Y_t = Y_0 - \int_0^t f(s,Y_s,Z_s)\, ds + \int_0^t Z_s\cdot dB_s - K_t, \qquad 0\le t \le T\;.
	\end{equation*}
	 Because $K$ is non decreasing, Assumption $(H_f)$ leads to 
	\begin{equation*}
		|Y_t| \leq |Y_0| + K_T + \int_0^T |f(s,0,0)| ds + \lambda \int_0^T |Z_s| ds + \sup_{0\leq t \leq T} \left | \int_0^t Z_s\cdot dB_s \right| + \lambda \int_0^t |Y_s|\, ds \,,
	\end{equation*}
	for $t\in[0,T]$. Since $Y$ has continuous paths, Gronwall's lemma gives
	\begin{equation*}
		\sup_{0\leq t \leq T} |Y_t| \leq e^{\lambda T} \left(|Y_0| + K_T + \int_0^T |f(s,0,0)| ds + \lambda \int_0^T |Z_s| ds + \sup_{0\leq t \leq T} \left | \int_0^t Z_s\cdot dB_s \right|\right) \,,
	\end{equation*}
	and the result follows from the Burkholder-Davis-Gundy inequality.
\end{proof}

\begin{rem}
	We deduce from this lemma that, when the generator has linear growth, the process $Y$ belongs to $\m S^2$ as soon as $Z$ and $K$ are square integrable.
\end{rem}

\section{ The particular case of linear mean reflection}
\label{sec:llin}

In this section, we consider the simpler particular case where the mean reflection is linear. Namely, $\ell:(t,y)\mapsto y-u_t$ so that  the condition \eqref{eq:main_constraint} is replaced by 
\begin{eqnarray}
	 \e[Y_t] \geq u_t, \qquad 0\leq t \leq T, \label{eq:main_constraint_linear}
\end{eqnarray}
where $u$ is a deterministic continuous map from $[0,T]$ to $\rset$. Hereby, we impose a running deterministic lower bound $u$ on the expected value of the $Y$-component of the solution. Besides, we recall that Assumption $H_\xi$ ensures that this constraint is already satisfied at maturity so that we have
\begin{eqnarray}
	 \e[\xi] \geq u_T \,. \qquad  \label{eq:terminal_constraint_linear}
\end{eqnarray}

In this linear framework, we are able in Proposition \ref{prop:existence1_linear}  to construct an explicit deterministic flat solution $(Y,Z,K)$ to a BSDE with linear mean reflexion \eqref{eq:main_constraint_linear}, when the driver does not depend on $Y$ nor $Z$. Building modifications on this deterministic flat solution, we exhibit an infinite number of non deterministic flat solutions to the same BSDE. This feature is our main motivation in order to focus solely on deterministic flat solutions in order to ensure the well posedness of BSDEs  with mean reflection. Indeed, Proposition \ref{prop:uniqueness_linear} indicates that uniqueness holds within the class of deterministic flat solutions to \eqref{eq:main_dyn}-\eqref{eq:main_constraint_linear}.\\ 

Hereafter, we first derive an a priori estimate on the solution, and then tackle respectively the uniqueness and existence issues. In order to handle general drivers, the enhanced demonstration relies on a contraction argument, but an alternative approach via penalization is also provided in Section \ref{subsec_penalization}.




\subsection{A priori estimate}\label{subsec_estimate}

 The main mathematical advantage of considering a linear loss function $\ell$ is that it allows to use some of the computational tricks associated to classical reflected BSDEs, in particular when the compensator $K$ is moreover deterministic.  As detailed in the proof below, this enables us to derive the following a-priori estimate on the solution to the BSDE with linear mean reflexion.

\begin{lemme}\label{en:est}
	Let $(Y,Z,K)$ be a deterministic square integrable flat solution to the BSDE \eqref{eq:main_dyn} with linear mean reflexion \eqref{eq:main_constraint_linear}. Then
	\begin{equation*}
		\e\left[\sup_{0\leq t \leq T} |Y_t|^2 + \int_0^T |Z_s|^2 ds\right] + K_T^2 \leq C(\lambda, T) \left(\e\left[|\xi|^2 + \int_0^T |f(s,0,0)|^2 ds\right] + \| u \|_\infty^2\right).
	\end{equation*}
\end{lemme}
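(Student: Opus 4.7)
The plan is to combine an Itô expansion of $|Y|^2$ with a separate upper bound on $K_T$ obtained by taking expectations in the BSDE, and to tie the two together through the flatness condition \eqref{main_flat}.

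First, I would apply Itô's formula to $|Y_t|^2$ between $t$ and $T$, take expectations, and use that $K$ is deterministic to rewrite $\e\bigl[\int_t^T Y_s\, dK_s\bigr] = \int_t^T \e[Y_s]\, dK_s$. The flatness condition combined with $\e[Y_s]\geq u_s$ forces $\e[Y_s] = u_s$ on the support of $dK$, so this quantity equals $\int_t^T u_s\, dK_s\leq \|u\|_\infty K_T$. Controlling $2Y_s f(s,Y_s,Z_s)$ by a Young inequality on top of the Lipschitz bound, absorbing half of $\e\bigl[\int_t^T|Z_s|^2 ds\bigr]$ on the left-hand side, and applying a reverse-time Gronwall argument to $t\mapsto \e[|Y_t|^2]$, one obtains
\begin{equation*}
\sup_{0\leq t\leq T}\e[|Y_t|^2] + \e\!\left[\int_0^T|Z_s|^2\, ds\right] \leq C(\lambda,T)\bigl(A+\|u\|_\infty K_T\bigr),
\end{equation*}
with $A := \e[|\xi|^2]+\e\bigl[\int_0^T|f(s,0,0)|^2 ds\bigr]$.

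Second, I would bound $K_T$ on its own. Taking expectations in \eqref{eq:main_dyn} yields, for every $t\in[0,T]$, $K_T-K_t = \e[Y_t]-\e[\xi]-\int_t^T\e[f(s,Y_s,Z_s)]\, ds$. If $K_T=0$ the bound is trivial; otherwise, continuity of $K$ ensures that $t^\star := \sup\{s:K_s=0\}$ lies strictly below $T$, and the flatness condition combined with continuity of $s\mapsto \e[Y_s]$ (inherited from $Y\in\Sc^2$) forces $\e[Y_{t^\star}]=u_{t^\star}$. Setting $t=t^\star$ in the identity above gives $K_T\leq \|u\|_\infty + \e[|\xi|] + \int_0^T \e[|f(s,Y_s,Z_s)|]\, ds$, and Lipschitz plus Cauchy--Schwarz produces
\begin{equation*}
K_T^2 \leq C(\lambda,T)\left(\|u\|_\infty^2+A+\sup_{0\leq s \leq T}\e[|Y_s|^2]+\e\!\left[\int_0^T|Z_s|^2\, ds\right]\right).
\end{equation*}

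Third, I would close the loop by inserting the first estimate into the second and handling the mixed term via $2\|u\|_\infty K_T\leq \|u\|_\infty^2/\eta + \eta K_T^2$ with $\eta>0$ small enough that the resulting contribution of $\sup_s\e[|Y_s|^2]+\e[\int|Z|^2]$ can be absorbed on the left. This yields $K_T^2+\sup_s\e[|Y_s|^2]+\e[\int_0^T|Z_s|^2\, ds]\leq C(\lambda,T)(A+\|u\|_\infty^2)$. To upgrade $\sup_s\e[|Y_s|^2]$ to $\e[\sup_s |Y_s|^2]$, I would return to Itô, bound $\sup_t\int_t^T Y_s\, dK_s\leq K_T\sup_s|Y_s|$ deterministically, handle the martingale term by the Burkholder--Davis--Gundy inequality, and use Young to absorb $\tfrac14\e[\sup_s|Y_s|^2]$ on the left, exactly as in the proof of Lemma \ref{en:S2}.

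The main obstacle is the circular coupling between the bounds on $(Y,Z)$ and on $K$: any Itô-based control on $Y,Z$ involves $K_T$, while any bound on $K_T$ drawn from the BSDE a priori involves $Y$ and $Z$. What breaks the circularity is the conjunction of $K$ being deterministic with the flatness condition, which collapses $\e[\int Y_s\, dK_s]$ to $\int u_s\, dK_s$ and allows a Skorokhod-type identification of $K_T$ at the endpoint $t^\star$ where $\e[Y_\cdot]$ first touches the barrier $u$.
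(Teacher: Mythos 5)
Your proof is correct and follows essentially the same route as the paper: an Itô energy estimate in which flatness plus the determinism of $K$ collapses the $dK$-term to $\int u_s\,dK_s\le \|u\|_\infty K_T$, a separate bound on $K_T$ from taking expectations in the dynamics, a Young-type absorption to decouple the two estimates, and Lemma \ref{en:S2} to upgrade $\sup_t\e[|Y_t|^2]$ to $\e[\sup_t|Y_t|^2]$. The only deviation is your Skorokhod-type evaluation at the last zero $t^\star$ of $K$, which is valid but not needed: the paper simply evaluates the expectation identity at $t=0$, writing $K_T=Y_0-\e[\xi]-\e\left[\int_0^T f(s,Y_s,Z_s)\,ds\right]$ and bounding $|Y_0|^2\le\sup_{t}\e[|Y_t|^2]$, which bypasses the continuity/flatness argument at $t^\star$ altogether.
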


\begin{proof}
	Let us recall that the Lipschitz property of $f$ implies 
	\begin{equation*}
		2 y\cdot f(t,y,z) \leq |f(t,0,0)|^2 + \frac{1}{2} |z|^2 + \left(1+2\lambda + 2\lambda^2\right) |y|^2, \qquad \forall (y,z)\in\R\times\R^d\;.
	\end{equation*}
	Setting $\beta := 1 + 2\lambda + 2 \lambda^2$, Itô's formula together with the previous inequality provides
	\begin{multline*}
		e^{\beta t} |Y_t|^2 + 
		\frac{1}{2} \int_t^T e^{\beta s} |Z_s|^2 ds   \leq e^{\beta T} |\xi|^2 + \int_0^T e^{\beta s} |f(s,0,0)|^2 ds + 2 \int_t^T e^{\beta s} Y_s dK_s -2 \int_t^T e^{\beta s} Y_s Z_s\cdot dB_s,
	\end{multline*}
	for all $t\in[0,T]$. Since $K$ is deterministic and $\ell$ is linear, we compute
	\begin{align*}
		2 \e \left[\int_t^T e^{\beta s} Y_s dK_s\right] = 2  \int_t^T e^{\beta s} \e\left[Y_s\right] dK_s & = 2  \int_t^T e^{\beta s} \left(\e\left[Y_s\right] -u_s\right)dK_s + 2  \int_t^T e^{\beta s} u_s dK_s \;.
	\end{align*}
	Besides the solution is flat so that condition \eqref{main_flat} directly implies
	\begin{align*}
		2 \e \left[\int_t^T e^{\beta s} Y_s dK_s\right] 
		  & = 2  \int_t^T e^{\beta s} u_s dK_s \leq 2 e^{\beta T} \| u \|_\infty K_T.
	\end{align*}
	We deduce that
	\begin{multline*}
		\sup_{0\leq t\leq T} \e \left[e^{\beta t} |Y_t|^2\right] + \e\left[ \int_0^T e^{\beta s} |Z_s|^2 ds \right] 
		\leq 3 \left( \e\left[e^{\beta T} |\xi|^2 + \int_0^T e^{\beta s} |f(s,0,0)|^2 ds\right] + 2 e^{\beta T} \| u \|_\infty K_T \right),
	\end{multline*}
	from which, we get, for any $\ep>0$,
	\begin{equation}\label{eq:p1}
		\sup_{0\leq t\leq T} \e \left[ |Y_t|^2\right] + \e\left[ \int_0^T  |Z_s|^2 ds \right] \leq C(\lambda,T,\ep) \left(\e\left[|\xi|^2 + \int_0^T |f(s,0,0)|^2 ds\right] + \| u \|_\infty^2\right) + \ep\, K_T^2.
	\end{equation}	
	On the other hand, since $K$ is deterministic, we have
	\begin{equation*}
		K_T = \e\left[K_T\right] = Y_0 - \e\left[\xi\right] - \e\left[\int_0^T f(s,Y_s,Z_s) ds \right],
	\end{equation*}
	from which we deduce the inequality
	\begin{equation}\label{eq:p2}
		K_T^2 \leq C(\lambda,T) \left(\e\left[ \int_0^T |f(s,0,0)|^2 ds \right] + \sup_{0\leq t\leq T} \e\left[|Y_t|^2\right] + \e \left[\int_0^T |Z_s|^2 ds \right] \right).
	\end{equation}	
	Combining this estimate  with  \eqref{eq:p1} 
	and $\ep$ small enough, we get
	\begin{equation*}
		\sup_{0\leq t\leq T} \e \left[ |Y_t|^2\right] + \e\left[ \int_0^T  |Z_s|^2 ds \right] + |K_T|^2\leq C(\lambda,T) \left(\e\left[|\xi|^2 + \int_0^T |f(s,0,0)|^2 ds\right] + \| u \|_\infty^2\right)
	\end{equation*}
	and the result follows from Lemma~\ref{en:S2}.
\end{proof}

\subsection{ Uniqueness of the deterministic flat solution} \label{subsec_unique_linear} 

The uniqueness of flat deterministic solution for a BSDE with linear mean reflection follows mainly from a similar argumentation as the one used for classical reflected BSDE. This is detailed in the next Proposition. 

\begin{prop}\label{prop:uniqueness_linear}
	The BSDE \eqref{eq:main_dyn} with linear mean reflexion \eqref{eq:main_constraint_linear}  has at most one square integrable deterministic flat solution.
\end{prop}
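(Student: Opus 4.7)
The plan is to mimic the standard uniqueness argument for reflected BSDEs, exploiting in a crucial way the fact that both candidate compensators are deterministic and that both satisfy the flatness condition \eqref{main_flat}.

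Suppose $(Y,Z,K)$ and $(Y',Z',K')$ are two deterministic flat square integrable solutions. Set $\delta Y = Y - Y'$, $\delta Z = Z - Z'$, $\delta K = K - K'$, so that $\delta Y_T = 0$ and
\begin{equation*}
    d\delta Y_t = -[f(t,Y_t,Z_t) - f(t,Y'_t,Z'_t)]\, dt + \delta Z_t \cdot dB_t - d\delta K_t.
\end{equation*}
First I would apply Itô's formula to $e^{\beta t}|\delta Y_t|^2$ for a parameter $\beta$ to be tuned, integrate from $t$ to $T$, and take expectation. Lemma \ref{en:S2} ensures $\delta Y \in \Sc^2$ and $\delta Z \in \Hc^2$, so by Burkholder--Davis--Gundy the stochastic integral is a true martingale and vanishes after taking expectation. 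This yields
\begin{equation*}
    \e\bigl[e^{\beta t}|\delta Y_t|^2\bigr] + \beta\,\e\!\int_t^T\! e^{\beta s}|\delta Y_s|^2 ds + \e\!\int_t^T\! e^{\beta s}|\delta Z_s|^2 ds = 2\,\e\!\int_t^T\! e^{\beta s}\delta Y_s\bigl[f(s,Y_s,Z_s)-f(s,Y'_s,Z'_s)\bigr]ds + 2\int_t^T\! e^{\beta s}\e[\delta Y_s]\, d\delta K_s,
\end{equation*}
where the last integral is genuinely deterministic because $K$ and $K'$ are.

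The key step is then to show that this last cross term is non-positive. Writing $\e[\delta Y_s] = (\e[Y_s]-u_s) - (\e[Y'_s]-u_s)$ and $d\delta K_s = dK_s - dK'_s$, the product expands into four pieces. The flatness condition \eqref{main_flat} applied to each solution wipes out the two ``diagonal'' pieces $\int_t^T e^{\beta s}(\e[Y_s]-u_s)dK_s$ and $\int_t^T e^{\beta s}(\e[Y'_s]-u_s)dK'_s$ (since the integrand is a nonnegative Borel function of time, its integral over any subinterval against the positive measure $dK$ is zero as well). The two remaining ``cross'' pieces are of the form $\int_t^T e^{\beta s}(\e[Y_s]-u_s)dK'_s$ and $\int_t^T e^{\beta s}(\e[Y'_s]-u_s)dK_s$, both of which are non-negative by \eqref{eq:main_constraint_linear}; they appear with a minus sign in the expansion, so the whole cross term is $\le 0$. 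This is the only step that uses anything specific to BSDEs with mean reflection, and the combination of flatness and the \emph{deterministic} nature of $K, K'$ is what makes it work — I expect this to be the main obstacle, essentially the reason one restricts to the deterministic class.

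Once the cross term is absorbed, the $f$-term is controlled via the Lipschitz assumption $(H_f)$ and Young's inequality:
\begin{equation*}
    2\delta Y_s[f(s,Y_s,Z_s)-f(s,Y'_s,Z'_s)] \le (2\lambda + 2\lambda^2)|\delta Y_s|^2 + \tfrac{1}{2}|\delta Z_s|^2.
\end{equation*}
Choosing $\beta = 1 + 2\lambda + 2\lambda^2$, I would deduce
\begin{equation*}
    \e\bigl[|\delta Y_t|^2\bigr] + \e\!\int_t^T|\delta Y_s|^2 ds + \tfrac{1}{2}\,\e\!\int_t^T|\delta Z_s|^2 ds \le 0,
\end{equation*}
forcing $\delta Y \equiv 0$ and $\delta Z \equiv 0$. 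Finally, reading \eqref{eq:main_dyn} for both solutions and subtracting gives $\delta K_T - \delta K_t = 0$ for all $t$, so $K = K'$, which closes the uniqueness argument.
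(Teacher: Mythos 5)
Your proof is correct and follows essentially the same route as the paper: Itô's formula applied to $e^{\beta t}|\delta Y_t|^2$, with the cross term $\int \e[\delta Y_s]\,d\delta K_s$ shown to be non-positive by combining the deterministic nature of $K,K'$, the flatness condition for the diagonal pieces, and the constraint \eqref{eq:main_constraint_linear} for the off-diagonal pieces. The only differences are cosmetic (choice of exponent and explicit Young's inequality versus the paper's pre-absorbed constant).
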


\begin{proof}
	Let us consider two such solutions $(Y^1, Z^1, K^1)$ and $(Y^2, Z^2, K^2)$ and denote
	$$\delta Y:=Y^1-Y^2,\quad \delta Z:=Z^1-Z^2 \;\;\mbox{ and } \;\; \delta K:=K^1-K^2.$$	
	 Setting $a:=2\lambda + 2 \lambda^2$ and arguing as in Lemma \ref{en:est}, Itô's formula gives easily
	\begin{equation*}
		e^{a t}|\delta Y_t |^2 + \frac{1}{2 }\int_t^T e^{as} |\delta Z_s|^2 \, ds   \leq  -  2 \int_t^T e^{as} \delta Y_s\,\delta Z_s\cdot dB_s +  2 \int_t^T e^{as} \delta Y_s\, d\delta K_s \,,
	\end{equation*}
	for $t\in[0,T]$.	Let us observe that since both solutions are flat and deterministic and $\ell$ is linear, we nicely have
	\begin{align*}
		\e\left[\int_t^T e^{as} \delta Y_t \, d \delta K_s\right] & = \int_t^T e^{as} \left[\left(\e[Y^1_s]-u_s\right) -  \left(\e\left[Y^2_s\right]-u_s\right)\right] d K^1_s  \\
		& \quad - \int_t^T e^{as} \left[\left(\e[Y^1_s]-u_s\right) - \left(\e\left[Y^2_s\right]-u_s\right)\right] d K^2_s \\
		& = - \int_t^T e^{as} \left(\e\left[Y^2_s\right]-u_s\right) d K^1_s - \int_t^T e^{as}  \left(\e\left[Y^1_s\right]-u_s\right) d K^2_s \leq 0,
	\end{align*}
	for any $t\in[0,T]$. Thus the result follows by taking expectations in the previous inequality.
\end{proof}

As detailed in Remark  \ref{rem_uniqueness} below, considering deterministic $K$ processes is a key for the obtention of a unique solution to the BSDE of interest. We now turn to the existence property.

\subsection{ Existence of a deterministic flat solution}  \label{subsec_existence_linear}

We first focus on the particular case where the driver $f$ does not depend on $Y$ nor $Z$. In this simple case, we are able to construct explicitly the unique solution to a BSDE with linear mean reflection.

\begin{prop}\label{prop:existence1_linear}
 Let $C$ be a square integrable progressively measurable stochastic process or more generally in the space $\lp^2\left(\Omega;\lp^1(0,T)\right)$. The BSDE with linear mean reflection
\begin{equation}\label{eq:linov}
	Y_t  =\xi+\int_t^T C_s\, ds - \int_t^T Z_s\cdot dB_s + K_T-K_t,\qquad \e[Y_t] \geq u_t, \qquad 0\leq t\leq T,
\end{equation}
 has a unique square integrable deterministic flat solution.
\end{prop}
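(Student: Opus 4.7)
The uniqueness part is already granted by Proposition~\ref{prop:uniqueness_linear}, so it suffices to exhibit one deterministic flat solution; since the driver depends on neither $Y$ nor $Z$, the martingale component of $Y$ decouples from the reflection, and I would carry out an explicit two-step construction. First, set
\[
X_t := \e\left[\xi+\int_t^T C_s\,ds\,\Big|\,\m F_t\right],\qquad 0\le t\le T.
\]
As $\xi+\int_0^T C_s\,ds\in L^2$, martingale representation applied to the $L^2$-martingale $t\mapsto \e[\xi+\int_0^T C_s\,ds\mid\m F_t]$ yields $Z\in\Hc^2$ with $X_t=\xi+\int_t^T C_s\,ds-\int_t^T Z_s\cdot dB_s$, so in particular $X\in\Sc^2$. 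Any deterministic $K$ then forces a candidate solution of the form $Y_t=X_t+K_T-K_t$.

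The next step is to build $K$ as a Skorokhod-type reflection of the deterministic obstacle $\mu_t:=u_t-\e[X_t]$, since the constraint $\e[Y_t]\ge u_t$ rewrites as $K_T-K_t\ge\mu_t$. The function $\mu$ is continuous on $[0,T]$ (by Fubini, $\e[X_t]=\e[\xi]+\int_t^T\e[C_s]\,ds$ is absolutely continuous, and $u$ is continuous), and $\mu_T=u_T-\e[\xi]\le 0$ thanks to~\eqref{eq:terminal_constraint_linear}. I would then set
\[
\nu_t:=\sup_{s\in[t,T]}\mu_s^+,\qquad K_t:=\nu_0-\nu_t,\qquad Y_t:=X_t+\nu_t.
\]
Continuity of $\mu$ forces $\nu$ to be continuous and non-increasing with $\nu_T=0$, so $K\in\Ac^2_D$ with $K_0=0$, and $\nu_t\ge\mu_t$ delivers the running constraint~\eqref{eq:main_constraint_linear}.

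The only remaining point is flatness, $\int_0^T(\nu_t-\mu_t)\,dK_t=0$. The measure $dK=-d\nu$ is supported on the set $\{t:\mu_t^+=\nu_t\}$: indeed, if $\mu_t^+<\nu_t$, continuity of $\mu$ implies that the supremum defining $\nu_t$ is attained at some $s^*>t$, so $\nu$ is constant on $[t,s^*]$ and $dK$ assigns it no mass there. On this support, either $\nu_t>0$, in which case $\mu_t=\mu_t^+=\nu_t$ and the integrand vanishes, or $\nu_t=0$, in which case the non-increasing non-negative function $\nu$ stays at $0$ and $dK$ vanishes as well. The only non-routine point is the continuity of the running maximum $\nu$, a classical fact about continuous functions on $[0,T]$ which I would invoke without proof; the rest reduces to integrability bookkeeping.
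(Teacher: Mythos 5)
Your proof is correct and follows essentially the same route as the paper: take expectations to reduce the constraint to a deterministic Skorokhod reflection problem for $x_t=\e[\xi+\int_t^T C_s\,ds]$ (your $\nu_t=\sup_{t\le s\le T}(x_s-u_s)^-$ and $K_t=\nu_0-\nu_t$ is exactly the paper's explicit $K$), then solve the classical BSDE with this deterministic $K$ and invoke Proposition~\ref{prop:uniqueness_linear} for uniqueness. The only cosmetic difference is that you verify the Skorokhod support and flatness properties by hand where the paper simply cites Skorokhod's lemma, and you use martingale representation directly instead of quoting existence for the classical BSDE.
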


\begin{proof}
	Let us set $\displaystyle x_t = \e\left[\xi + \int_t^T C_s\, ds\right]$. By Skorokhod's lemma, there exists a unique pair of deterministic functions $(y,K):[0,T]\rightarrow\rset$ such that $K$ is non decreasing and $K_0=0$ and we have
	\begin{equation}\label{temp1}
		y_t = x_t + K_T-K_t,\qquad y_t \geq u_t,\qquad \int_0^T (y_t-u_t) \, dK_t=0.
	\end{equation}
	By construction, observe that $K$ is continuous and  $K_t = \sup_{0\leq s \leq T} \left(x_s-u_s\right)_- - \sup_{t\leq s\leq T}\left(x_s-u_s\right)_-$.  Now, $K$ being given, we know that the BSDE
	\begin{equation*}
		Y_t  =\xi+\int_t^T C_s\, ds - \int_t^T Z_s\cdot dB_s + K_T-K_t,\quad 0\leq t\leq T,
	\end{equation*}
	has a unique square integrable solution $(Y,Z)$. Moreover, we have by construction $y_t = \e[Y_t]$. It follows from \eqref{temp1} that $(Y,Z,K)$ is a deterministic flat solution of the BSDE~\eqref{eq:linov}. The uniqueness follows from Proposition \ref{prop:uniqueness_linear}.
\end{proof}

\begin{rem}\label{rem_uniqueness}
	Let us observe that the BSDE with mean reflexion \eqref{eq:linov} has infinite many  flat solutions with random $K$. Let us start with $(Y^0,Z^0,K^0)$ the deterministic flat solution to \eqref{eq:linov} constructed above in the proof of Proposition \ref{prop:existence1_linear}. For any real $\alpha$, let us set $M^\alpha := (e^{\alpha B_t - \alpha^2 t/2})_t$ and define 
	\begin{equation*}
		K^\alpha_t := \int_0^t  M^\alpha_s\, dK^0_s, \qquad 0\le t \le T\;.
	\end{equation*}
	Being given $K^\alpha$, let $(Y^\alpha,Z^\alpha)$ be the solution to the BSDE
	\begin{equation*}
		Y^\alpha_t  =\xi+\int_t^T C_s\, ds - \int_t^T Z^{\alpha}_s\cdot dB_s + K^{\alpha}_T-K^{\alpha}_t,\quad 0\leq t\leq T.
	\end{equation*}
	For all $0\le t \le T$, since $\e[M^\alpha_t]=1$ and $K^0$ is deterministic, we have $\e\left[K^{\alpha}_t\right] = K^0_t$ so that $\e\left[Y^\alpha_t\right] = \e\left[Y^0_t\right] \geq u_t$. Moreover, since $\e\left[Y^0_t\right]-u_t = 0$ $dK$-a.e., we compute
	\begin{equation*}
		\int_0^T \left(\e\left[Y^{\alpha}_t\right] - u_t\right) dK^{\alpha}_t = \int_0^T \left(\e\left[Y^0_t\right] - u_t\right)  M^\alpha_t \, dK^0_t = 0.
	\end{equation*}
	Hence, for any real $\alpha$, $\left(Y^{\alpha},Z^{\alpha},K^{\alpha}\right)$ is also a flat solution to \eqref{eq:linov}.
\end{rem}

We are now in position to turn to the general driver case and we will derive the well-posedness of the BSDE of interest via the classical use of a well chosen contraction property. 

\begin{thm}\label{thm existence llinear}
	The BSDE \eqref{eq:main_dyn} with linear mean reflexion \eqref{eq:main_constraint_linear} has a unique deterministic square integrable flat solution.
\end{thm}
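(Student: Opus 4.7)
Since Proposition \ref{prop:uniqueness_linear} already delivers uniqueness, only existence remains. The plan is a standard Picard contraction argument that lifts Proposition \ref{prop:existence1_linear} (which handles drivers independent of $(Y,Z)$) to the general Lipschitz case.

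Fix a tuning parameter $\beta > 0$ and work on the Banach space $\Hc^2_\beta$ of progressively measurable pairs $(U,V)$ equipped with the weighted norm $\|(U,V)\|_\beta^2 := \e\left[\int_0^T e^{\beta s}\bigl(|U_s|^2 + |V_s|^2\bigr)\,ds\right]$. Define a map $\Phi : \Hc^2_\beta \to \Hc^2_\beta$ as follows: for $(U,V) \in \Hc^2_\beta$, the process $C_s := f(s,U_s,V_s)$ satisfies $\e\left[\int_0^T |C_s|^2 ds\right] < \infty$ by $(H_f)$, so Proposition \ref{prop:existence1_linear} produces a unique deterministic flat solution $(Y,Z,K)$ of \eqref{eq:linov} with driver $C$; set $\Phi(U,V) := (Y,Z)$. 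Any fixed point of $\Phi$, paired with the associated deterministic compensator $K$, is exactly a deterministic flat solution of \eqref{eq:main_dyn}--\eqref{eq:main_constraint_linear}, and Lemma \ref{en:S2} upgrades $Y$ to $\Sc^2$.

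To prove $\Phi$ is a strict contraction for $\beta$ large enough, take $(U^i,V^i)$ for $i=1,2$ with images $(Y^i,Z^i)$ and associated compensators $K^i$, and use the standard $\delta$ notation. Applying Itô's formula to $e^{\beta t}|\delta Y_t|^2$ produces the three usual terms: a driver cross term controlled by the Lipschitz hypothesis and Young's inequality, a stochastic integral with zero expectation, and the reflection cross term $2\,\e\!\left[\int_0^T e^{\beta s}\,\delta Y_s\, d\delta K_s\right]$. The crucial observation is that this last term is non-positive: since both $K^i$ are deterministic, both solutions are flat, and $\ell$ is linear, the computation of Proposition \ref{prop:uniqueness_linear} applies verbatim and yields
\[
\e\!\left[\int_0^T e^{\beta s}\,\delta Y_s\, d\delta K_s\right] = -\int_0^T e^{\beta s}\bigl(\e[Y^2_s]-u_s\bigr)\,dK^1_s \;-\;\int_0^T e^{\beta s}\bigl(\e[Y^1_s]-u_s\bigr)\,dK^2_s \;\leq\; 0.
\]
Discarding this non-positive contribution and absorbing the $|\delta Y|^2$ part of the driver cross term into $\beta\int_0^T e^{\beta s}|\delta Y_s|^2\,ds$ for $\beta$ large enough, one obtains the desired contraction estimate of the form $\|\Phi(U^1,V^1)-\Phi(U^2,V^2)\|_\beta \leq \tfrac{1}{\sqrt{2}}\|(U^1-U^2,V^1-V^2)\|_\beta$, and Banach's fixed point theorem concludes.

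The only non-routine ingredient is the sign of the reflection cross term, which combines linearity of $\ell$, flatness of both solutions and determinism of both compensators -- precisely the three features that motivated the paper's restriction to deterministic flat solutions. Once this sign is secured, the remainder is the classical weighted-norm Picard machinery for BSDEs.
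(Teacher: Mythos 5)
Your proposal is correct and follows essentially the same route as the paper: both reduce to Proposition \ref{prop:existence1_linear} to define the map $\Phi:(U,V)\mapsto(Y,Z)$ and both obtain the contraction from the non-positivity of $\e\bigl[\int_0^T e^{\beta s}\,\delta Y_s\,d\delta K_s\bigr]$, which rests on linearity of $\ell$, flatness, and determinism of the compensators. The only difference is presentational: you invoke Banach's fixed point theorem in a $\beta$-weighted norm, whereas the paper fixes $a=4\lambda^2+1$, iterates explicitly with a factor $1/2$, and adds a separate $\sup$-norm estimate on $\delta Y$ (via the explicit formula for $K^i_T-K^i_t$) to get convergence in $\Sc^2\times\Hc^2$ — a step you replace, legitimately, by Lemma \ref{en:S2}.
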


\begin{proof}
	For given processes $U\in\Sc^2$ and $V\in\Hc^2$, let $(Y,Z,K)$ be the deterministic square integrable flat solution to the BSDE
	\begin{equation*}
		Y_t  =\xi+\int_t^T f(s,U_s,V_s)\, ds - \int_t^T Z_s\cdot dB_s + K_T-K_t, \qquad \e\left[Y_t\right] \geq u_t,\qquad 0\le t \le T\;,
	\end{equation*}
	as provided by Proposition \ref{prop:existence1_linear}. Let us show that the mapping $\Phi:(U,V) \longmapsto (Y,Z)$, from $\m S^2\times \m H^2$ into itself, has a unique fixed point. 
	
	For this purpose, let us denote $(Y^1,Z^1,K^1)$ and $(Y^2,Z^2,K^2)$   the two deterministic square integrable flat solutions to the above  BSDE with given $(U^1,V^1)$ and $(U^2, V^2)$ respectively. Set
	$$\delta Y:=Y^1-Y^2,\quad \delta Z:=Z^1-Z^2,\quad \delta K:=K^1-K^2, \quad \delta U:=U^1-U^2, \quad 
	\delta V:=V^1-V^2.$$
	For $a=4\lambda^2 +1$, Itô's formula leads to
	\begin{multline*}
		|\delta Y_0 |^2 + \int_0^T e^{as} \left(|\delta Y_s|^2 + |\delta Z_s|^2\right) \, ds  \\
		 \leq \frac{1}{2} \int_0^T e^{as} \left( |\delta U_s|^2 + |\delta V_s|^2\right) ds -  2\int_0^T e^{as} \delta Y_s\,\delta Z_s\cdot dB_s +  2 \int_0^T e^{as} \delta Y_s\, d\delta K_s.
	\end{multline*}
	As observed in the proof of Proposition \ref{prop:existence1_linear}, we compute
	\begin{align*}
		\e\left[\int_0^T e^{as} \delta Y_s \, d \delta K_s\right] 
		& = - \int_0^T e^{as} \left(\e\left[Y^2_s\right]-u_s\right) d K^1_s - \int_0^T e^{as}  \left(\e\left[Y^1_s\right]-u_s\right) d K^2_s \leq 0.
	\end{align*}	
	It follows directly that
	\begin{equation*}
		\e\left[\int_0^T e^{as} \left(|\delta Y_s|^2 + |\delta Z_s|^2\right) \, ds\right] \leq \frac{1}{2} \e\left[\int_0^T e^{as} \left( |\delta U_s|^2 + |\delta V_s|^2\right) ds\right].
	\end{equation*}
	Since we have
	\begin{gather*}
		\delta Y_t = \e\left[ \int_t^T \left(f(s,U^1_s,V^1_s)-f(s,U^2_s,V^2_s)\right)ds \: \Big|  \: \m F_t\right] + (K^1_T-K^1_t) - (K^2_T-K^2_t), \\
		K^i_T-K^i_t = \sup_{t\leq s\leq T} \left(\e\left[\xi+\int_s^T f\left(r,U^i_r,V^i_r\right)dr\right]-u_s\right)_- \;,
	\end{gather*}
	we get immediately 
	\begin{equation*}
		\e\left[\sup_{0\leq t\leq T} |\delta Y_t|^2\right] \leq C\, \e\left[\int_0^T \left( |\delta U_s|^2 + |\delta V_s|^2\right) ds\right].
	\end{equation*}
	As a byproduct, $\Phi$ is continuous from $\m S^2\times\m H^2$ into itself.
	
	Moreover, starting from $(Y^0,Z^0) = (0,0)$ and setting for $n\geq 1$, $(Y^n,Z^n) = \Phi\left(Y^{n-1},Z^{n-1}\right)$, we deduce easily from the previous estimates that
	\begin{equation*}
		\e\left[\sup_{0\leq t\leq T} \left|Y^{n+1}_t -Y^n_t\right|^2 + \int_0^T \left|Z^{n+1}_t -Z^n_t\right|^2 dt\right] \leq C\, 2^{-n},
	\end{equation*}
	and finally that the sequence $\left\{(Y^n,Z^n)\right\}_{n\geq 0}$ converges in $\m S^2\times\m H^2$ to the unique fixed point of $\Phi$.
\end{proof}

\subsection{ Alternative approach via penalization} \label{subsec_penalization}\label{subsec:penalize}

 In order to handle classical reflected BSDE, a very helpful feature is the characterization of the solution as a limit of corresponding penalized classical BSDEs. The idea simply relies on the addition of a strong penalization on the driver of a classical BSDE, which is only active whenever the constraint is not satisfied. As the penalization strength increases, the $Y$ component of the penalized solution also increases and converges at the limit to the minimal solution of the reflected BSDE. In our framework, the constraint only integrates the distribution of $Y$, and not the pointwise value of the process $Y$. For this reason, no comparison argument can ensure that a sequence of penalized BSDEs will be increasing and the classical line of proof falls down. Nevertheless, whenever the benchmark function $u$ is constant, we are able to identify the unique deterministic flat solution of a BSDE with linear mean reflexion as the limit of corresponding penalized BSDEs of McKean-Vlasov type.
 This is the purpose of the next Proposition.

 \begin{prop} Suppose that the benchmark $(u_t)_t$ is constant and also denoted $u$. For any positive integer $n$, let us consider $(Y^n,Z^n)$ solution to the BSDE of McKean-Vlasov type
	\begin{align*}
		Y^n_t & = \xi + \int_t^T f\left(s,Y^n_s,Z^n_s\right) ds + \int_t^T n \left(u-\e\left[Y^n_s\right]\right)_+ ds - \int_t^T Z^n_s\cdot dB_s \,. \quad 0\le t \le T\;,\\
	\end{align*}
	and denote $K^n:= \int_0^. n \left(u-\e\left[Y^n_s\right]\right)_+ ds$.  As $n$ goes to infinity, $(Y^n,Z^n,K^n)$ converges to the unique flat deterministic solution of the BSDE \eqref{eq:main_dyn} with linear mean reflexion \eqref{eq:main_constraint_linear}. 
 \end{prop}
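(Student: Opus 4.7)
The plan is to obtain uniform estimates on the penalized triple $(Y^n,Z^n,K^n)$, extract a convergent subsequence whose limit is a deterministic flat solution of the BSDE with linear mean reflection, and then invoke the uniqueness result in Theorem~\ref{thm existence llinear} to upgrade this to convergence of the full sequence.

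For each fixed $n$, existence and uniqueness of $(Y^n,Z^n)$ in $\m S^2\times\m H^2$ follow from a standard Picard iteration in a weighted norm: since $x\mapsto (u-x)_+$ is $1$-Lipschitz, the distribution-dependent driver $(s,y,z,\mu)\mapsto f(s,y,z)+n\bigl(u-\int y\, d\mu\bigr)_+$ is Lipschitz in the marginal law. By construction, $K^n$ is deterministic, continuous, nondecreasing with $K^n_0=0$. Uniform a priori estimates are obtained by mimicking the proof of Lemma~\ref{en:est}: the key point is that, since $K^n$ is deterministic and $u$ is constant,
\begin{equation*}
	\e\int_t^T Y^n_s\, dK^n_s = \int_t^T y^n_s\cdot n(u-y^n_s)_+\, ds = u\,(K^n_T-K^n_t) - n\int_t^T\bigl[(u-y^n_s)_+\bigr]^2\, ds \leq u\,(K^n_T-K^n_t),
\end{equation*}
where $y^n_s:=\e[Y^n_s]$. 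Plugged into the Lipschitz estimate for the BSDE this yields, exactly as in Lemma~\ref{en:est}, a uniform bound $\sup_n\bigl\{\e\sup_{0\leq t\leq T}|Y^n_t|^2+\e\int_0^T|Z^n_s|^2\,ds+(K^n_T)^2\bigr\}<\infty$. As a byproduct, $\int_0^T(u-y^n_s)_+\,ds = K^n_T/n\to 0$.

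The deterministic nondecreasing functions $K^n$ are uniformly bounded on $[0,T]$, so Helly's theorem provides a subsequence converging pointwise to a nondecreasing $K$, and the associated measures $dK^n$ converge weakly to $dK$. Combined with weak $\m H^2$-compactness of $Z^n$ and standard BSDE stability, we identify a subsequential limit $(Y,Z)$ of $(Y^n,Z^n)$ satisfying~\eqref{eq:main_dyn}. The continuity of $K$ is then read off from the equation itself (the other terms defining $K_T-K_t$ are $t$-continuous), which upgrades $K^n\to K$ to uniform convergence on $[0,T]$ and hence $(Y^n,Z^n)\to(Y,Z)$ strongly in $\m S^2\times\m H^2$. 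The constraint $\e[Y_t]\geq u$ follows from $\int_0^T(u-y^n_s)_+\,ds\to 0$ combined with the uniform convergence $y^n\to \e[Y_\cdot]$. Flatness is obtained by passing to the limit in
\begin{equation*}
	\int_0^T (y^n_s-u)\, dK^n_s = -n\int_0^T\bigl[(u-y^n_s)_+\bigr]^2\, ds \leq 0,
\end{equation*}
together with $\e[Y_\cdot]\geq u$ and $K$ nondecreasing. Thus $(Y,Z,K)$ is \emph{the} unique deterministic flat solution of~\eqref{eq:main_dyn}-\eqref{eq:main_constraint_linear}, and Theorem~\ref{thm existence llinear} promotes subsequential convergence to convergence of the whole sequence.

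The main obstacle is the absence of a comparison argument. The classical penalization scheme for reflected BSDEs proceeds by checking that $Y^n$ is monotone in $n$ via comparison; here the penalty $n(u-\e[Y^n])_+$ depends on the \emph{distribution} of $Y^n$, so pointwise comparison is unavailable and the monotone picture is lost. We bypass this by combining weak compactness with the uniqueness result already proved, at the cost of giving up the monotone convergence enjoyed in the classical reflected setting.
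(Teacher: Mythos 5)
Your Step 1 is exactly the paper's (the identity $\e\int Y^n_s\,dK^n_s=\int y^n_s\,dK^n_s$ for deterministic $K^n$ and the resulting uniform bound), but the convergence step has a genuine gap. First, weak $\m H^2$-compactness of $(Z^n)$ does not allow you to pass to the limit in the nonlinear term $\int_t^T f(s,Y^n_s,Z^n_s)\,ds$: for a driver that is merely Lipschitz in $z$, "standard BSDE stability" requires strong convergence of the inputs, and at the point where you invoke it you have established no convergence of $(Y^n)$ at all and only weak convergence of $(Z^n)$. Second, and more seriously, the continuity of the Helly limit $K$ cannot be "read off from the equation": if $K$ had a jump, the candidate limit $Y$ would jump by exactly the same amount (the equation forces $\Delta Y_t=\Delta K_t$), so the identity $K_T-K_t=Y_t-\xi-\int_t^T f\,ds+\int_t^T Z\cdot dB$ is consistent with a discontinuous $K$; your argument is circular, since $Y$ is continuous precisely when $K$ is. Without continuity of $K$ you cannot upgrade Helly's pointwise convergence to uniform convergence, the limit triple need not lie in $\m S^2\times\m H^2\times\Ac^2_D$, the constraint $\e[Y_t]\geq u$ is obtained only for a.e. $t$, and the passage to the limit in $\int_0^T(y^n_s-u)\,dK^n_s\le 0$ (which needs uniform convergence of the integrands against weakly converging measures, as in Lemma~\ref{en:analysis}) breaks down.

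This is precisely why the paper does not use soft compactness: it first sharpens your bound $\int_0^T(u-y^n_s)_+\,ds\le K^n_T/n$ to the quadratic estimate \eqref{eq:fonda}, $n^2\int_0^T(u-\e[Y^n_s])_+^2\,ds\le C(\lambda,T)$, and then uses it to prove a Cauchy-type estimate between consecutive penalized solutions, giving \emph{strong} convergence of $(Y^n,Z^n)$ in $\m S^2\times\m H^2$ and \emph{uniform} convergence of the deterministic $K^n$; continuity of $K$, the constraint at every $t$, and flatness (via Lemma~\ref{en:analysis}) then follow, and no subsequence extraction or appeal to uniqueness is needed. Your observation that comparison/monotonicity is unavailable is correct and is indeed the paper's motivation, but replacing it by weak compactness alone does not suffice; to repair your argument you would need either the paper's Cauchy estimate or some other mechanism (e.g. a stability estimate reducing the problem to the perturbation of the driver by $(K^n_T-K^n_s)$ plus a separate proof that the Helly limit has no atoms) that yields strong convergence and a continuous limit $K$.
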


 \begin{proof} Observe first the the solution $(Y^n,Z^n)$ is well and uniquely defined, according to the results of \cite{Buckdahn} up to slight modifications discussed for example in \cite{ChaGar}.  
 
 \paragraph{Step 1. Uniform a priori estimate on the sequence $(Y^n,Z^n,K^n)_n$}
 Since $K^n$ is deterministic, we have
	\begin{align*}
		2 \e \left[\int_t^T e^{as} Y^n_s dK^n_s\right] = 2  \int_t^T e^{as} \e\left[Y^n_s\right] dK^n_s  & = 2  \int_t^T e^{as} \left(\e\left[Y^n_s\right] -u\right)dK^n_s + 2  \int_t^T e^{as} u dK^n_s \\
		& = - 2n \int_t^T e^{as} \left(u-\e\left[Y^n_s\right]\right)_+^2 ds + 2  \int_t^T e^{as} u dK^n_s \\
		& \leq 2  u \int_t^T e^{as}  dK^n_s\;,
		\end{align*}
		for any constant $a$ and $t\in[0,T]$. 	Thus, arguing as in the proof of Lemma~\ref{en:est}, we get the following estimate on the solution $(Y^n,Z^n)$
	\begin{equation*}
		\sup_{n\geq 1 }\left(\e\left[\sup_{0\leq t \leq T} |Y^n_t|^2 + \int_0^T |Z^n_s|^2 ds\right] + \left|K_T^n\right|^2\right) \leq C(\lambda, T) \left(\e\left[|\xi|^2 + \int_0^T |f(s,0,0)|^2 ds\right] + u ^2\right).
	\end{equation*}
	
 \paragraph{Step 2. Convergence of the sequence $(Y^n,Z^n,K^n)_n$}
 Since the constraint is satisfied at maturity, observe also that $(\left(u-\e\left[Y^n_0\right]\right)_+)^2$ rewrites
\begin{eqnarray*}
|\left(u-\e\left[Y^n_0\right]\right)_+|^2+2n\int_0^T |\left(u-\e\left[Y^n_s\right]\right)_+|^2ds
&=&-2\int_0^T \mathbb E[f(s,Y_s^n,Z_s^n)] \left(u-\e\left[Y^n_s\right]\right)_+ ds\\
&\le& n\int_0^T |\left(u-\e\left[Y^n_s\right]\right)_+|^2ds+\frac{C(\lambda,T)}{n},
\end{eqnarray*}
according to the previous estimate. Hence we deduce for later use that 
 \begin{equation}\label{eq:fonda}
  n^2 \int_0^T |\left(u-\e\left[Y^n_s\right]\right)_+|^2ds \le C(\lambda,T).
  \end{equation}

We now look towards a contracting property of the sequence $(Y^n,Z^n)$ and denote  $\delta X := X^{n+1}-X^n$ for $X=Y,Z$ or $K$.
 Setting $a := \frac{1}{2} + 2\lambda + 2 \lambda^2$, a standard computation based on Itô's formula  provides
	\begin{equation*}
		e^{a t}|\delta Y_t |^2 + \frac{1}{2}\int_t^T e^{as} \left(|\delta Y_s|^2 + |\delta Z_s|^2\right) \, ds  
		 \leq 2 \int_t^T e^{as} \delta Y_s\, d\delta K_s -  2\int_t^T e^{as} \delta Y_s\,\delta Z_s\cdot dB_s, \quad 0\le t \le T,
	\end{equation*}
	from which we deduce that
	\begin{equation}\label{eq:utile}
		\sup_{0\leq t\leq T} \e\left[\left| \delta Y_t \right|^2 +\int_0^T \left(|\delta Y_s|^2 + |\delta Z_s|^2\right) \, ds \right] \leq 2\, \sup_{0\leq t\leq T}  \e\left[\int_t^T e^{as}\delta Y_s\, d\delta K_s\right].
	\end{equation}
	For any $s\in[0,T]$, denoting $v^n_s:=\left(u-y^n_s \right)_+$ where $y^n_s$ stands for $\e\left[Y^n_s\right]$, we have $dK^n_s = nv^n_s ds $ and 
	\begin{equation*}
		\e\left[\int_t^T e^{as} \delta Y_s\, d\delta K_s\right] = \int_t^T e^{as} \left[y^{n+1}_s-y^n_s\right] \left[(n+1) v^{n+1}_s  - nv^n_s\right] ds, \quad 0\le t \le T.
	\end{equation*}
	Moreover, we compute
	\begin{eqnarray*}
		\left[y^{n+1} -y^n\right] \left[(n+1) v^{n+1} - nv^n\right] &=&
		 \left[\left(u-y^n\right) - \left(u-y^{n+1})\right)\right] \left[(n+1) v^{n+1} - nv^n\right] \\
		&\le& -n |v^n|^2 +(2n+1) v^n v^{n+1} - (n+1) |v^{n+1}|^2 \;.
	\end{eqnarray*}
	But, we have
	\begin{equation*}
		-nx^2 + (2n+1) xy -(n+1) y^2 = -n \left(x-\left(1 + \dfrac{1}{2n}\right)y\right)^2 + \dfrac{y^2}{4n} \;, \qquad x,y\in\R\;,
	\end{equation*} 	
	so that combining the previous estimates with \eqref{eq:fonda}, we deduce
	\begin{equation*}
		\e\left[\int_0^T e^{as} \delta Y_s\, d\delta K_s\right] \leq \frac{1}{4n} \int_0^T |v^{n+1}_s|^2 ds \leq  \frac{C(\lambda,T)}{n^3} \;.
	\end{equation*}
	Plugging this estimate in \eqref{eq:utile}, it follows that
	\begin{equation*}
		\sup_{0\leq t\leq T} \e\left[ |\delta Y_t|^2 \right] + \e\left[ \int_0^T \left(|\delta Y_s|^2 + |\delta Z_s|^2\right) ds \right] \leq \frac{C(\lambda,T)}{n^3}.
	\end{equation*}
	Setting $\Delta_t K^n = K^n_T - K^n_t$ and reminding that $K^n$ is deterministic, observe that 
	\begin{equation*}
		 \Delta_t K^{n+1} - \Delta_t K^{n} =  \mathbb E[ \delta Y_t]  - \mathbb E\left[\int_t^T (f(s,Y^{n+1}_s,Z^{n+1}_s) -  f(s,Y^{n}_s,Z^{n}_s)) \, ds\right] \;,
		\end{equation*}
		 from which we deduce
		 \begin{equation*}
		 	\sup_{0\leq t\leq T}|\Delta_t K^{n+1} - \Delta_t K^{n} |\le \frac{C(\lambda,T)}{n^3}.
		 \end{equation*}
		 Since we have
		 \begin{equation*}
		 	\delta Y_t = \e \left(\int_t^T \left( f\left(s,Y^{n+1}_s,Z^{n+1}_s\right) -  f(s,Y^{n}_s,Z^{n}_s) \right) \, ds \: \Big|\: \m F_t\right) + \Delta_t K^{n+1} - \Delta_t K^n,
		 \end{equation*}
combining the above and Burkholder-Davis-Gundy inequality,  we conclude that  $(Y^n,Z^n,K^n)_n$ converges strongly to a limit $(Y, Z, K)$, namely
\begin{equation*}
	 \e\left[ \sup_{0\leq t\leq T} |Y^n_t- Y_t|^2 + \int_0^T | Z^n- Z_s|^2 ds \right] + \sup_{0\leq t\leq T} | K^n_t-K_t|^2 \longrightarrow_{n\rightarrow \infty} 0.
\end{equation*}

\paragraph{Step 3. Properties of the limit $(Y,Z,K)$}
 Passing to the limit the dynamics of $(Y^n,Z^n,K^n)_n$, remark that $(Y,Z,K)$ satisfies \eqref{eq:main_dyn}. Observe also that, by construction, $K$ is deterministic, nondecreasing with $K_0=0$. 
 Besides, the estimate \eqref{eq:fonda} directly implies that 
	\begin{equation*}
        \int_0^T |(u-\e[Y_t])_+|^2 dt   \,=\, \lim_{n\rightarrow\infty} \int_0^T |(u-\e[Y^n_t])_+|^2 dt \,=\, 0\;,
	\end{equation*}
	so that $\e[Y_t]\ge u$, for any $t\in[0,T]$. Finally from Lemma~\ref{en:analysis} below, since $(\e[Y^n],K^n)$ converges to $(\e[Y], K)$ in $\m C([0,T])$, we have 
	\begin{equation*}
		\lim_{n\to\infty} \int_0^T (\e[Y^n_t]-u)_+ dK^n_t = \int_0^T (\e[Y_t]-u)_+ dK_t,
	\end{equation*}
	and, on the other hand, 
	\begin{equation*}
        \int_0^T (\e[Y^n_t]-u)_+ dK^n_t   \,=n\, \int_0^T (\e[Y^n_t]-u)_+ (u-\e[Y^n_t])_+ dt \;=\; 0.
	\end{equation*}
 It follows that $(Y,Z,K)$ is the unique flat deterministic solution to the BSDE  \eqref{eq:main_dyn} with linear mean reflection \eqref{eq:main_constraint_linear}.
 \end{proof} 
 
 We now complete the argumentation by proving a rather elementary lemma, that we just used in the previous proof. 
 
 \begin{lemme}\label{en:analysis}
 	Let $(u^n)_{n\geq 1}$ and $(K^n)_{n\geq 1}$ be two convergent sequences of $\left(\m C_T, |\cdot |_\infty\right)$. We assume that, for each $n\geq 1$, $K^n$ is nondecreasing and we denote by $u$ and $K$ the corresponding limits of $(u^n)_n$ and $(K^n)_n$. We have
	\begin{equation*}
		\lim_{n\to\infty} \int_0^T u^{n}_t dK^n_t = \int_0^T u_tdK_t.
	\end{equation*}
 \end{lemme}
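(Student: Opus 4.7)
The proof plan rests on two elementary ingredients: the uniform continuity of $u$ on the compact interval $[0,T]$, and the observation that the uniform limit $K$ is itself nondecreasing and continuous, hence defines a finite positive Borel measure on $[0,T]$. I would first set $V := \sup_{n\geq 1} K^n_T < \infty$ (finite because $K^n_T \to K_T$) and write the standard splitting
\[
\int_0^T u^n_t\, dK^n_t - \int_0^T u_t\, dK_t \;=\; \int_0^T (u^n_t - u_t)\, dK^n_t \;+\; \Bigl(\int_0^T u_t\, dK^n_t - \int_0^T u_t\, dK_t\Bigr).
\]
The first term is trivially bounded by $\|u^n-u\|_\infty\, K^n_T \leq V\,\|u^n-u\|_\infty$, which tends to $0$ by hypothesis.

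The heart of the argument is then to show that $\int_0^T u_t\, dK^n_t \to \int_0^T u_t\, dK_t$, i.e. weak convergence of $dK^n$ to $dK$ tested against the continuous function $u$. Given $\varepsilon>0$, by uniform continuity of $u$ I would pick a subdivision $0=t_0<t_1<\cdots<t_N=T$ fine enough that the oscillation of $u$ on each subinterval $[t_{i-1},t_i]$ does not exceed $\varepsilon$. For any nondecreasing continuous integrator $\mu$, the straightforward bound
\[
\Bigl|\int_0^T u_t\, d\mu_t - \sum_{i=1}^N u_{t_{i-1}}(\mu_{t_i}-\mu_{t_{i-1}})\Bigr| \;\leq\; \varepsilon\, \mu_T
\]
applies simultaneously to $\mu=K^n$ and $\mu=K$, with uniform right-hand side $\varepsilon\,\max(V,K_T)$. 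The finite Riemann sum $\sum_i u_{t_{i-1}}(K^n_{t_i}-K^n_{t_{i-1}})$ converges as $n\to\infty$ to $\sum_i u_{t_{i-1}}(K_{t_i}-K_{t_{i-1}})$ simply by pointwise convergence of $K^n$ at the $N+1$ partition points. Taking $\limsup_n$ on the second term and then letting $\varepsilon\to 0$ concludes.

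The only mild subtlety in the plan is ensuring that $K$ is a legitimate Stieltjes integrator. This is immediate: uniform convergence preserves both monotonicity and continuity, so $K\in\mathcal{C}_T$ is nondecreasing, $dK$ is a finite positive Borel measure on $[0,T]$, and the Riemann--Stieltjes integral $\int_0^T u_t\, dK_t$ is unambiguously defined, which in turn legitimises the partition-based approximation used above. No step is really a serious obstacle; the argument is essentially the classical Helly-type passage to the limit under uniform convergence of cumulative functions to a continuous limit.
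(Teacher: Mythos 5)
Your proof is correct and follows essentially the same route as the paper's: both arguments kill the $u^n-u$ term by uniform convergence and reduce the remaining term to weak convergence of $dK^n$ tested against a uniformly accurate step-function approximation of $u$ (the paper phrases this as density of piecewise constant functions in $\left(\mathcal{C}_T,|\cdot|_\infty\right)$, you phrase it via uniform continuity and Riemann--Stieltjes sums over a fine partition, in both cases using only pointwise convergence of $K^n$ at finitely many points). The only cosmetic nit is that the total mass of $dK^n$ is $K^n_T-K^n_0$ rather than $K^n_T$ (the lemma does not assume $K^n_0=0$), but this quantity is still uniformly bounded by the convergence of $K^n$, so your estimates go through unchanged.
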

 
 \begin{proof}
	For any piecewise constant function $h$, we have
	\begin{align*}
		\int_0^T u^n_s dK^n_s - \int_0^T u_s dK_s & =  \int_0^T [u^n_s-u_s] dK^n_s + \int_0^T [u_s-h_s] dK^n_s + \int_0^T h_s dK^n_s \\
		& \quad - \int_0^T h_s dK_s + \int_0^T [h_s-u_s] dK_s,
	\end{align*}
	from which we deduce that
	\begin{align*}
			\left|\int_0^T u^n_s dK^n_s - \int_0^T u_s dK_s\right| &  \leq |u^n-u|_\infty |K^n|_\infty + |u-h|_{\infty} \left(|K^n|_\infty + |K|_\infty\right) \\
			&\quad + \left|\int_0^T h_s dK^n_s - \int_0^T h_s dK_s\right|.
	\end{align*}
	Since $h$ is piecewise constant, we have 
	\begin{equation*}
		\lim_{n\to\infty}\int_0^T h_s dK^n_s = \int_0^T h_s dK_s, \quad\text{and}\quad\limsup \left|\int_0^T u^n_s dK^n_s - \int_0^T u_s dK_s\right| \leq 2 \, |u-h|_\infty\, |K|_\infty,
	\end{equation*} 
	from which we get the result since piecewise constant functions on $[0,T]$ are dense in $\left(\m C_T, |\cdot |_\infty\right)$.
 \end{proof}

\section{ BSDE with general mean reflection} 
\label{sec:general_case}

We now turn to the general case where $x\longmapsto \ell(t,\omega,x)$ is non necessarily linear. We recall that we still work under Assumptions  ($H_\xi$)-($H_f$)-($H_\ell$) presented in Section \ref{sec:setup}.
In the same spirit as the approach presented in the previous section, we first construct explicitly a solution whenever the driver does not depend on $Y$ nor $Z$, and then tackle the general case via a Picard contraction argument. The construction of an explicit solution in the non linear case is less natural  and relies a lot on the use of the following operator: 
\begin{eqnarray*}
	L_t : \lp^2\left(\Fc_T\right) &\rightarrow& [0,\infty)  \\
	X &\mapsto& \inf\left\{ x\geq 0 : \e\left[\ell(t,x+X)\right] \geq 0 \right\}\;,
\end{eqnarray*}
defined for any $t\in[0,T]$. Since $\ell$ is of linear growth at infinity and $\e\left[\ell(t,\infty)\right]>0$, $L_t$ is well defined. Namely, $L_t(X)$ represents the minimal deterministic strength with which the random variable $X$ must be pushed upward in order to satisfy the constraint of interest at time $t$. In the previous linear case where $\ell:(t,x)\mapsto x-u_t$, we simply explicitly have $L_t : X \mapsto \left(\e\left[X\right]-u_t\right)_-$. \\

We first focus on the constant driver case and we then are able to tackle the general case. For this last framework, a Lipschitz property for the operator $L$ will be required. 

\subsection{ The constant driver case} 

In this section, we demonstrate the well posedness of the BSDE of interest in the constant driver case. As explained above, the operator $L$ plays an important role in order to build a solution to such BSDE. 

\begin{prop}\label{prop:existence_uniqueness_2}
 Let $C$ be a square integrable progressively measurable stochastic process or more generally in the space $\lp^2\left(\Omega;\lp^1(0,T)\right)$. 
 
 Then, the BSDE with mean reflection
\begin{equation}\label{eq:nov}
	Y_t  =\xi+\int_t^T C_s\, ds - \int_t^T Z_s\cdot dB_s + K_T-K_t,\qquad \e[\ell(t,Y_t)] \geq 0, \qquad 0\leq t\leq T,
\end{equation}
 has a unique square integrable deterministic flat solution. 
\end{prop}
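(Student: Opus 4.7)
My plan is to decouple the BSDE dynamics from the mean reflection constraint and construct the deterministic compensator $K$ via a Skorokhod-type formula built from the operator $L_t$.

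First, since the driver $C$ depends on neither $Y$ nor $Z$, classical BSDE theory provides a unique $(\tilde Y, Z) \in \Sc^2 \times \Hc^2$ solving $\tilde Y_t = \xi + \int_t^T C_s\, ds - \int_t^T Z_s \cdot dB_s$. For any deterministic-$K$ solution $(Y, Z', K)$ of \eqref{eq:nov}, uniqueness of the classical BSDE forces $Z' = Z$ and $Y_t = \tilde Y_t + A_t$, with $A_t := K_T - K_t$ a continuous non-increasing deterministic function satisfying $A_T = 0$. The strict monotonicity and continuity of $y \mapsto \ell(t,y)$ together with $\e[\ell(t,\infty)] > 0$ make the constraint \eqref{eq:main_constraint} equivalent to $A_t \geq L_t(\tilde Y_t)$, while the flatness condition \eqref{main_flat} becomes the requirement that $-dA$ charges only the set $\{t : A_t = L_t(\tilde Y_t)\}$.

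Setting $h(t) := L_t(\tilde Y_t)$, and noting that $(H_\xi)$ gives $h(T) = L_T(\xi) = 0$, I would then solve this reverse-time Skorokhod problem by the explicit formula
\begin{equation*}
A_t \,=\, \sup_{s \in [t,T]} h(s), \qquad K_t \,:=\, A_0 - A_t, \qquad Y_t \,:=\, \tilde Y_t + A_t.
\end{equation*}
For existence, the mean reflection constraint follows from monotonicity of $\ell$ in $y$ and the definition of $L_t$, while flatness follows from the classical fact that $-dA$ is supported on $\{t : A_t = h(t)\}$, points at which $\e[\ell(t, Y_t)] = 0$ whenever $L_t(\tilde Y_t) > 0$ (by strict monotonicity and continuity of $\ell$, so that the infimum defining $L_t$ is attained with equality). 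Uniqueness within the class of deterministic flat solutions reduces to the uniqueness of solutions to this Skorokhod problem.

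The main technical point, and the step I expect to be the most delicate, is showing that $h$ is continuous and bounded on $[0,T]$. I would first establish joint continuity of the map $L : [0,T] \times L^2(\Fc_T) \to [0,\infty)$, $(t,X) \mapsto L_t(X)$: given $(t_n, X_n) \to (t, X)$, the linear growth bound in $(H_\ell)$-$4$ together with $L^2$-convergence of $X_n$ furnishes an integrable majorant, dominated convergence and the joint continuity of $\ell$ in $(t,y)$ yield $\e[\ell(t_n, x + X_n)] \to \e[\ell(t, x + X)]$ locally uniformly in $x$, and strict monotonicity allows one to invert and conclude $L_{t_n}(X_n) \to L_t(X)$. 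Since $\tilde Y \in \Sc^2$ is also continuous as a map from $[0,T]$ to $L^2(\Fc_T)$, composition gives continuity of $h$. A uniform bound $\sup_{t \in [0,T]} h(t) \leq C(1 + \|\tilde Y\|_{\Sc^2})$, obtained from $(H_\ell)$-$4$ and a uniform-in-$t$ lower bound on $\liminf_{y \to \infty} \e[\ell(t,y)]$ coming from continuity in $t$ and compactness of $[0,T]$, then ensures $K \in \Ac^2_D$ and $Y \in \Sc^2$, completing the proof.
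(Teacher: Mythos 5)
Your construction coincides with the paper's: it also sets $X_t=\e_t\left[\xi+\int_t^T C_s\,ds\right]$, $\Psi_t=L_t(X_t)$, proves continuity of $\Psi$ (by a direct monotonicity case analysis rather than your joint-continuity-of-$L$ argument), defines $K_T-K_t=\sup_{t\le s\le T}\Psi_s$, and verifies the constraint and flatness exactly as you do. Your uniqueness step, framed as uniqueness for the reverse-time deterministic Skorokhod problem, is essentially a repackaging of the paper's first-crossing-time contradiction argument, so the proposal is correct and follows essentially the same route as the paper.
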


 \begin{proof}
We derive the existence and uniqueness properties separately. 
 
\paragraph{Step 1. Existence}
 In order to solve \eqref{eq:nov}, let us define
\begin{equation*}
	\Psi_t := L_t\left(X_t\right),\text{ where }X_t = \e_t\left(\xi + \int_t^T C_s\, ds\right), \qquad 0\le t \le T\;.
\end{equation*}
Since $\ell$ is continuous in space, observe that 
\begin{equation}\label{temp123123}
	\e\left[\ell\left(t,X_t + \Psi_t\right)\right] \geq 0, \qquad 0\le t \le T\;.
\end{equation}
Let us now show that $\Psi$ is moreover continuous. Observe first that the map $x\longmapsto \e\left[\ell(t,x+X)\right]$ is continuous and strictly increasing. If $\e\left[\ell(t,X_t)\right]\leq 0$, since $\ell$ is continuous and has linear growth, for any  $x< L_t(X_t)<y $, one has
\begin{equation*}
	\lim_{s\to t} \e\left[\ell(s,x+X_{s})\right]=\e\left[\ell(t,x+X_t)\right] < 0 = \e\left[\ell(t,L_t(X_t)+X_t)\right] < \e\left[\ell(t,y+X_t)\right]=\lim_{s\to t} \e\left[\ell(s,y+X_{s})\right].
\end{equation*}
Then, if $|s-t|$ is small enough, $\e\left[\ell(s,x+X_{s})\right]<0$, $\e\left[\ell(s,y+X_{s})\right]>0$ and $x\leq L_s(X_s) \leq y$.

If  $\e\left[\ell(t,X_t)\right] >0 $, $L_t(X_t)=0$, and $\lim_{s\to t} \e\left[\ell(s,X_{s})\right]=\e\left[\ell(t,X_t)\right]>0$. If $|s-t|$ is small enough, $\e\left[\ell(s,X_{s})\right]>0$ and $L_s(X_s)=0$.

\smallskip

We are now in position to define the continuous process $K$ by 
\begin{equation*}
	K_t := \sup_{0\leq s\leq T} \Psi_s - \sup_{t\leq s\leq T} \Psi_s\;,  \; \qquad \mbox{so that } \qquad K_T-K_t = \sup_{t\leq s \leq T} \Psi_s \;, \qquad 0\le t \le T\;.
\end{equation*}
Observe that $K$ is deterministic, non decreasing with $K_0=0$. Given this process $K$, let $(Y,Z)$ be the unique solution to the classical BSDE with the dynamics of \eqref{eq:nov}. Then, since $x\longmapsto \ell(t,x)$ is non decreasing, we deduce from \eqref{temp123123} that 
\begin{equation}
	\label{eq:pos}
	\e\left[\ell(t,Y_t)\right] = \e\left[\ell\left(t,X_t + K_T-K_t\right)\right] = \e\left[\ell\left(t,X_t + \sup_{t\leq s\leq T} \Psi_s\right)\right] \geq \e\left[\ell\left(t,X_t + \Psi_t\right)\right] \geq 0.
\end{equation}
Hence,  $(Y,Z,K)$ is a deterministic solution to the BSDE with weak reflexion \eqref{eq:nov}. 

Let now verify that it is also flat. 
%
%
By definition of $K$, observe that $\sup_{t\leq s\leq T} \Psi_s = \Psi_t$ $dK_t$-a.e. and $\ind_{\Psi_t = 0} = 0$ $dK_t$-a.e. Thus, by \eqref{eq:pos}, we compute
\begin{equation*}
	\int_0^T \e\left[\ell(t,Y_t)\right] \, dK_t = \int_0^T \e\left[\ell(t,X_t + \Psi_t)\right] \, dK_t = \int_0^T \e\left[\ell(t,X_t + \Psi_t)\right]\ind_{\Psi_t >0} \, dK_t.
\end{equation*}
 Besides, since $\ell$ is continuous in space, we have $\e\left[\ell(t,X_t + \Psi_t)\right]=0$ as soon as $\Psi_t>0$, so that 
\begin{equation*}
	\int_0^T \e\left[\ell(t,X_t + \Psi_t)\right]\ind_{\Psi_t >0} \, dK_t = 0,
\end{equation*}
and $(Y,Z,K)$ is a flat solution.\\
%
%
%
%
%
%
%
%
%
%
%

\paragraph{Step 2. Uniqueness}
Let $(Y^1,Z^1,K^1)$ and $(Y^2,Z^2,K^2)$ be two deterministic flat solutions to the BSDE with mean reflexion \eqref{eq:nov}.
 We work towards a contradiction and suppose that there exists $t_1<T$ such that 
   $$K^1_T-K^1_{t_1}> K^2_T - K_{t_1}^2.$$
%
Setting $t_2$ as the first time $t$ after $t_1$ such that $K^1_T-K^1_{t}=K^2_T - K_{t}^2$, we observe that 
$$K^1_T-K^1_{t}>K^2_T - K_{t}^2,\quad t_1\le t < t_2 \;.$$
Since $\ell$ is strictly increasing, this implies that 
$$
\e[\ell(t,X_t+K_T^1-K_t^1)]>\e[\ell(t,X_t+K_T^2-K_t^2)]\ge 0,\quad t_1\le t < t_2 \;.
$$
But $(Y^1,Z^1,K^1)$ is a flat solution and hereby  
$$
\int_{t_1}^{t_2} \e[\ell(t,X_t+K_T^1-K_t^1)]dK_t^1 =0\;,
$$
so that we must have $dK^1=0$ on the interval $[t_1,t_2]$. We deduce that 
$$K_T^1-K^1_{t_2}=K_T^1-K_{t_1}^1>K_T^2-K_{t_1}^2\ge K^2_T-K^2_{t_2}\;,$$
which contradicts the definition of $t_2$. Hence $K^1=K^2$ and the uniqueness of solution to classical BSDEs directly implies that $(Y^1,Z^1,K^1)$ coincides with  $(Y^2,Z^2,K^2)$.
\end{proof}

\subsection{ Existence and uniqueness for the general case} 

Now that the well posedness for constant driver is established, we can focus on the BSDE  \eqref{eq:main_dyn} with mean reflexion \eqref{eq:main_constraint} in full generality. In order for the solution to be well defined, we will require a Lipschitz property of the operator $L$, that we present in the following additional Assumption: 
 \begin{itemize}
 \item[($H_L$)] The operator $L_t$ is Lipschitz continuous for the $\lp^1$-norm, uniformly in time: namely there exists a constant $C\geq 0$ such that
 \begin{equation*}
|L_t(X)-L_t(Y)|\le C\;\mathbb \e\left[|X-Y|\right] , \qquad 0\le t \le T\;, \;\; X,Y \in \lp^2\left(\Fc_t\right)\;.
\end{equation*}
\end{itemize} 

We are now in position to state the main result of the paper, providing the well-posedness of BSDEs with mean reflexion. 

\begin{thm}\label{thm: main}
In addition to the running assumptions ($H_\xi$)-($H_f$)-($H_\ell$), let us moreover assume that ($H_L$) is satisfied.
Then, there exists a unique deterministic flat solution $(Y,Z,K)\in\Sc^2\times \Hc^2\times\Ac^2_D$ to the BSDE \eqref{eq:main_dyn} with mean reflexion  \eqref{eq:main_constraint}.
%
\end{thm}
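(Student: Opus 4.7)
The plan is to mimic the Picard contraction argument from Theorem~\ref{thm existence llinear}, using Proposition~\ref{prop:existence_uniqueness_2} as the ``inner solver'' for the constant-driver subproblem. Concretely, for any $(U,V)\in\Sc^2\times\Hc^2$ the process $C_s:=f(s,U_s,V_s)$ is progressively measurable and square-integrable by $(H_f)$, so Proposition~\ref{prop:existence_uniqueness_2} produces a unique deterministic flat solution $(Y,Z,K)$ to
\[
Y_t=\xi+\int_t^T f(s,U_s,V_s)\,ds+K_T-K_t-\int_t^T Z_s\cdot dB_s,\qquad \e[\ell(t,Y_t)]\ge 0.
\]
This defines a map $\Phi:(U,V)\mapsto (Y,Z)$ from $\Sc^2\times\Hc^2$ to itself, and the aim is to show that, on an appropriately weighted space $\Sc^2_\beta\times\Hc^2_\beta$ (using norms of the form $\Esp{\int_0^T e^{\beta s}|\cdot|^2\,ds}^{1/2}$ or $\Esp{\sup_t e^{\beta t}|\cdot|^2}^{1/2}$) with $\beta$ large, $\Phi$ is a strict contraction.

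The crucial new ingredient compared with the linear case is that, since we no longer have the sign identity on $\int \delta Y\,d\delta K$, we must instead estimate $\delta K$ directly. The construction in the proof of Proposition~\ref{prop:existence_uniqueness_2} gives the explicit representation
\[
K^i_T-K^i_t=\sup_{t\le s\le T} L_s\bigl(X^i_s\bigr),\qquad X^i_s:=\e\!\left[\xi+\int_s^T f(r,U^i_r,V^i_r)\,dr\:\Big|\:\Fc_s\right],
\]
so that, thanks to $(H_L)$ and the Lipschitz property of $f$,
\[
\bigl|(K^1_T-K^1_t)-(K^2_T-K^2_t)\bigr|\le \sup_{s\ge t}\bigl|L_s(X^1_s)-L_s(X^2_s)\bigr|\le C\sup_{s\ge t}\e\!\left[\int_s^T \bigl(|\delta U_r|+|\delta V_r|\bigr)\,dr\right].
\]
Combined with the identity $\delta Y_t=\e_t\!\left[\int_t^T (f(\cdot,U^1,V^1)-f(\cdot,U^2,V^2))\,ds\right]+\delta(K_T-K_t)$ and a standard application of Doob's and Burkholder--Davis--Gundy's inequalities, this delivers
\[
\|\delta Y\|_{\Sc^2}^2+\|\delta Z\|_{\Hc^2}^2\le C(\lambda,T,C_L)\bigl(\|\delta U\|_{\Hc^2}^2+\|\delta V\|_{\Hc^2}^2\bigr).
\]

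To upgrade this Lipschitz bound into a contraction, the plan is to introduce the weight $e^{\beta s}$ in the norms and to reweight the estimate for $\e[|X^1_s-X^2_s|]$ by a Cauchy--Schwarz with the factor $\int_s^T e^{-\beta r}\,dr$; for $\beta$ sufficiently large the resulting prefactor becomes strictly smaller than $1$. Alternatively, one can first establish the contraction on a short time interval $[T-h,T]$ with $h$ small enough (depending only on $\lambda$ and the Lipschitz constant of $L$), obtain existence and uniqueness of a fixed point there, and then iterate backward to cover $[0,T]$ by concatenation, as is standard for BSDEs. The resulting fixed point $(Y,Z)$, together with the associated $K$ furnished by Proposition~\ref{prop:existence_uniqueness_2}, is then by construction a deterministic flat solution to \eqref{eq:main_dyn}--\eqref{eq:main_constraint}, and uniqueness within this class follows by applying the same estimate to any two candidate solutions.

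The main obstacle will be the absence of the favorable sign $\int\delta Y\,d\delta K\le 0$ that was available in the linear case; Assumption $(H_L)$ is precisely the substitute that allows us to control $\delta K$ pathwise (it is deterministic) by an $\Lp^1$-difference of conditional expectations, and the whole argument really turns on exploiting this Lipschitz bound efficiently enough to beat the Lipschitz constant of $f$ after reweighting.
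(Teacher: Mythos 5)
Your proposal follows the paper's proof in its essential ingredients: Proposition~\ref{prop:existence_uniqueness_2} as the inner solver, the explicit representation $K_T-K_t=\sup_{t\leq s\leq T}L_s(X_s)$, and Assumption ($H_L$) combined with the Lipschitz property of $f$ to control $\delta K$ directly, as a substitute for the sign argument on $\int \delta Y\,d\delta K$ available in the linear case. Your second option (contraction on a sufficiently small time interval, then backward iteration and concatenation) is exactly what the paper does: it obtains the stability estimate with constant $C(\lambda)(\tau-\sigma)\max(1,\tau-\sigma)$, picks $n$ with $C(\lambda)\min(T,T^2)/n^2<1$, and pastes the pieces together.

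Two caveats. First, your primary plan --- a global contraction in $e^{\beta s}$-weighted norms --- is not substantiated, and I would not rely on it. The $1/\beta$ gain from the weighted Cauchy--Schwarz does control the $Y$-part and $\delta R$, but it does not help for $\delta Z$: the Itô route, i.e.\ expanding $e^{\beta t}|\delta Y_t|^2$, produces the cross term $2\int e^{\beta s}\delta Y_s\,d\delta K_s$ which has no sign here (and whose natural bound involves the total variation of $\delta K$, not controlled by $\sup_t|\delta R_t|$), while the alternative estimate of $\delta Z$ through the martingale representation and BDG yields a weighted bound whose constant does not shrink as $\beta\to\infty$. This is precisely why the paper resorts to the small-interval argument rather than the classical weighted-norm contraction. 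Second, the concatenation step deserves to be written out rather than invoked as standard: you must check that the terminal datum $Y_{T_i}$ used on $[T_{i-1},T_i]$ satisfies $\e[\ell(T_i,Y_{T_i})]\geq 0$ (it does, being the left-endpoint value of the piece already constructed on $[T_i,T_{i+1}]$), that the pasted $K$ is deterministic, continuous, nondecreasing with $K_0=0$, and that the flatness condition holds on all of $[0,T]$; likewise, your uniqueness sentence implicitly uses that any deterministic flat solution is a fixed point of $\Phi$ on each subinterval, which follows from the uniqueness part of Proposition~\ref{prop:existence_uniqueness_2} and should be said explicitly. With these points filled in, your argument coincides with the paper's.
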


\begin{proof}
	Let us consider $\sigma$ and $\tau$ in the time interval $[0,T]$ with $\sigma\leq \tau$. Given $Y_\tau \in \lp^2\left(\m F_\tau\right)$, $\{U_t\}_{\sigma\leq t\leq \tau}\in\m S^2$ and $\{V_t\}_{\sigma\leq t\leq \tau}\in\m H^2$, Proposition~\ref{prop:existence_uniqueness_2} ensures the existence of a triple of processes $\{(Y_t,Z_t,R_t)\}_{\sigma\leq t\leq \tau}$ solution to the BSDE with mean reflexion
	\begin{gather*}
		Y_t= Y_\tau + \int_t^\tau f(s,U_s,V_s)\, ds - \int_t^\tau Z_s\cdot dB_s + R_t, \qquad \sigma\leq t\leq \tau, \\
		\e\left[\ell(t,Y_t)\right] \geq 0, \quad \sigma\leq t\leq \tau, \qquad \int_\sigma^\tau \e\left[\ell(t,Y_t)\right]\, dR_t = 0\,,
	\end{gather*}
	where we conveniently denoted $R_. = K_\tau-K_.$. In this setting, $R$ is non increasing with $R_\tau=0$ and, for $\sigma\leq t\leq \tau$,
	\begin{equation}
		\label{eq:repR}
		R_t = \sup_{t\leq s \leq \tau}\,L_s(X_s), \quad\text{with}\quad X_t = \e\left[Y_\tau + \int_t^\tau f(s,U_s,V_s)\, ds \:\Big|\: \m F_t\right].
	\end{equation}
	Let $(Y',Z',R')$ be the solution associated to $(U',V')$ and the same $Y_\tau$.
	
	We have, with usual notations, 
	\begin{equation*}
		\delta Y_t = \e\left[\int_t^\tau \left[f(s,U_s,V_s)-f(s,U'_s,V'_s)\right]\, ds \:\Big|\: \m F_t\right] + \delta R_t, \quad \sigma\leq t\leq \tau,
	\end{equation*}
	from which we deduce immediately, since $f$ is assumed to be Lipschitz, that
	\begin{equation*}
		\e\left[\sup_{\sigma\leq t\leq \tau}|\delta Y_t|^2 \right] \leq C(\lambda) \, \e\left[ \left(\int_\sigma^\tau \left(|\delta U_s|+|\delta V_s|\right)ds\right)^2\right]+ \sup_{\sigma \leq t \leq \tau} \left|\delta R_t\right| ^2.
	\end{equation*}
	Besides, since ($H_L$) holds, we deduce from the representation \eqref{eq:repR} together with $\delta Y_\tau=0$ and the Lipschitz property of $f$ that, for $\sigma\leq t\leq\tau$,
	\begin{align*}
		\left|\delta R_t\right| & \leq \left|\sup_{t\leq s\leq\tau} L_s(X_s) - \sup_{t\leq s\leq\tau} L_s(X'_s)\right| \leq \sup_{t\leq s\leq\tau} \left| L_s(X_s)-L_s(X'_s) \right| \leq \sup_{t\leq s\leq\tau} \e\left[\left|\delta X_s\right|\right] \\
		& \leq C(\lambda)\, \e\left[ \int_\sigma^\tau \left(|\delta U_s|+|\delta V_s|\right)ds\right] \;.
	\end{align*}
	Combining the previous estimates together with the Cauchy Schwartz inequality, we deduce
	\begin{equation*}
		\e\left[\sup_{\sigma\leq t\leq \tau}|\delta Y_t|^2 \right] \leq C(\lambda) \, \e\left[ \left(\int_\sigma^\tau \left(|\delta U_s|+|\delta V_s|\right)ds\right)^2\right],
	\end{equation*}
	and writing
	\begin{equation*}
		\int_\sigma^\tau \delta Z_s\cdot dB_s = \delta Y_\tau-\delta Y_\sigma + \delta R_\tau-\delta R_\sigma+\int_\sigma^\tau \left[f(s,U_s,V_s)-f(s,U'_s,V'_s)\right]\, ds
	\end{equation*}
	we finally have
	\begin{align}
		\nonumber
		\e\left[\sup_{\sigma\leq t\leq \tau}|\delta Y_t|^2 + \int_\sigma^\tau |\delta Z_s|^2 \, ds\right] & \leq C(\lambda) \, \e\left[ \left(\int_\sigma^\tau \left(|\delta U_s|+|\delta V_s|\right)ds\right)^2\right] , \\
		& \leq C(\lambda)\, \left(\tau-\sigma\right)\max\left(1, \tau-\sigma\right) \, \e\left[\sup_{\sigma\leq t\leq \tau}|\delta U_t|^2 + \int_\sigma^\tau |\delta V_s|^2 \, ds\right].
		\label{eq:smalltime}
	\end{align}
	Of course, this inequality shows that the BSDE~\eqref{eq:main_dyn} with mean reflexion~\eqref{eq:main_constraint} has a unique solution whenever $T$ is small enough.
	
	To cover the general case, let us pick $n\geq 1$ such that $C(\lambda) \min(T,T^2) / n^2 < 1$. For $i=0,\ldots, n$, let us set $T_i:=iT/n$. Starting from the interval $[T_{n-1},T_n]$ and $Y_{T_n}=\xi$, let, for $i=n,\ldots,1$, $(Y^i,Z^i,R^i)$ the unique solution to the BSDE with mean reflexion
	\begin{gather*}
		Y^i_t= Y^{i+1}_{T_i}+ \int_t^{T_i} f\left(s,Y^i_s,Z^i_s\right)\, ds - \int_t^{T_i} Z^i_s\cdot dB_s + R^i_t, \qquad \e\left[\ell\left(t,Y^i_t\right)\right] \geq 0 \qquad T_{i-1}\leq t\leq T_i, \\
	 \int_{T_{i-1}}^{T_i} \e\left[\ell\left(t,Y^i_t\right)\right]\, dR^i_t = 0, \qquad R^i \text{ continuous and non increasing on } [T_{i-1}, T_{i}] \text{ with }R^i_{T_i}=0.
	\end{gather*}
	Let us define $(Y,Z,R)$ on $[0,T]$ by setting 
	\begin{equation*}
		Y_t = Y^1_0 \ind_{{0}}(t) + \sum_{i=1}^n Y^i_t \ind_{]T_{i-1},T_i]}(t), \qquad Z_t = \sum_{i=1}^n Z^i_t \ind_{]T_{i-1},T_i[}(t),
	\end{equation*}
	and $R_t=R^n_t$ on $[T_{n-1},T_{n}]$ and, for $i=n-1,\ldots 1$, $R_t=R^{i}_t + R_{T_i}$ on $[T_{i-1},T_i]$. Since $R^i_{T_i}=0$, $R$ is continuous and non increasing. Finally, let us define $K_t = R_0-R_t$ to get a non decreasing continuous function with $K_0=0$. Since $R_T=0$, $K_T=R_0$ and $R_t=K_T-K_t$.
	
	It is plain to check that $(Y,Z,K)$ is a solution to the BSDE~\eqref{eq:main_dyn} with mean reflexion~\eqref{eq:main_constraint}. Uniqueness follows from the uniqueness on each small interval.
\end{proof}

It is worth noticing that the previous assumption $(H_L)$ is automatically satisfied as soon as $\ell$ is a bi-Lipschitz function in $x$. More precisely, we consider the following alternative assumption on $\ell$:
\begin{itemize}
	\item [($H_{b\ell}$)] The loss function $\ell : \Omega\times[0,T]\times\rset \fl \rset$ is a measurable map with respect to $\m F_T\times\m B([0,T])\times \m B(\rset)$ and there exists $0< c_l\leq C_l$ such that, $\p$-a.s.,
\begin{enumerate}
	\item $\forall y\in\rset$, $t\longmapsto \ell(t,y)$ is continuous,
	\item $\forall t\in[0,T]$, $y\longmapsto \ell(t,y)$ is strictly increasing,
	\item $\forall t\in[0,T]$, $\forall y\in\rset$, $|\ell(t,y)| \leq C_l(1+|y|)$.
	\item $\forall t\in[0,T]$, 
	\begin{equation}\label{bilip}
		{c_{\ell}} |x-y| \leq |\ell(t,x)-\ell(t,y) | \leq C_{\ell} |x-y|\;, \qquad  x,y\in\rset\;,
	\end{equation}
\end{enumerate} 
\end{itemize}

\begin{lemme}
Assume $(H_{b\ell})$. Then both Assumptions ($H_\ell$) and ($H_L$) hold.
\end{lemme}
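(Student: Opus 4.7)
The plan is to derive $(H_\ell)$ first, which is essentially routine, and then establish the Lipschitz property in $(H_L)$, which is the only substantive point.

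For $(H_\ell)$, the joint continuity of $(t,y)\mapsto \ell(t,y)$ will follow from continuity in $t$ for each fixed $y$ combined with the uniform Lipschitz bound in $y$: for $(t_n,y_n)\to (t,y)$,
\[
|\ell(t_n,y_n) - \ell(t,y)| \leq C_\ell\, |y_n - y| + |\ell(t_n,y) - \ell(t,y)|,
\]
and both terms vanish. Strict monotonicity and linear growth are imposed directly by $(H_{b\ell})$. For the condition $\e[\ell(t,\infty)] > 0$, the lower bi-Lipschitz bound yields $\ell(t,y) \geq \ell(t,0) + c_\ell\, y$ for $y\geq 0$, so $\ell(t,y) \to +\infty$ $\p$-a.s.\ as $y \to +\infty$, and monotone convergence gives $\e[\ell(t,y)] \to +\infty$.

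For $(H_L)$, I introduce $\phi_X(x) := \e[\ell(t,x+X)]$ for $X \in L^2(\m F_t)$. Integrating the pointwise bi-Lipschitz estimate \eqref{bilip}, I obtain, for $x \geq x'$,
\[
c_\ell\,(x-x') \leq \phi_X(x) - \phi_X(x') \leq C_\ell\,(x-x'),
\]
and similarly $|\phi_X(x) - \phi_Y(x)| \leq C_\ell\,\e[|X-Y|]$. In particular, $\phi_X$ is continuous, strictly increasing, and $\phi_X(+\infty) = +\infty$, so the infimum defining $L_t(X)$ is finite. By continuity, either $L_t(X) = 0$ (in which case $\phi_X(0) \geq 0$), or $L_t(X) > 0$ and $\phi_X(L_t(X)) = 0$.

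The core of the proof is then the Lipschitz estimate. I assume without loss of generality that $L_t(X) \geq L_t(Y)$. If $L_t(X) = 0$ there is nothing to prove; otherwise $\phi_X(L_t(X)) = 0$, while $\phi_Y(L_t(Y)) \geq 0$ in either sub-case. Writing
\[
0 \geq \phi_X(L_t(X)) - \phi_Y(L_t(Y)) = \bigl[\phi_X(L_t(X)) - \phi_X(L_t(Y))\bigr] + \bigl[\phi_X(L_t(Y)) - \phi_Y(L_t(Y))\bigr],
\]
the first bracket is bounded below by $c_\ell\bigl(L_t(X) - L_t(Y)\bigr)$ and the second by $-C_\ell\,\e[|X-Y|]$, so rearranging yields $(H_L)$ with constant $C = C_\ell / c_\ell$. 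I do not anticipate any real obstacle here: the argument simply exploits the uniform two-sided control provided by the bi-Lipschitz assumption, using monotonicity of $\phi_X$ to turn a value comparison into a comparison of arguments.
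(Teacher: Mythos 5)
Your proof is correct, and the constant you obtain, $C_\ell/c_\ell$, is exactly the paper's. The mechanics differ somewhat from the paper's argument: you work with the aggregated map $\phi_X(x)=\e[\ell(t,x+X)]$, show it inherits the two-sided Lipschitz bounds, and then rely on the binding identity $\phi_X(L_t(X))=0$ when $L_t(X)>0$ (which forces the small continuity/monotonicity case analysis you carry out), after which the two-term decomposition of $\phi_X(L_t(X))-\phi_Y(L_t(Y))$ delivers the estimate. The paper never uses the exact zero: it only needs $\e[\ell(t,X+L_t(X))]\ge 0$, and shows pointwise, applying the lower bound in \eqref{bilip} to the shift $\frac{C_\ell}{c_\ell}\e[|X-Y|]$ and the upper bound to $X-Y$, that $\e\bigl[\ell\bigl(t,L_t(X)+\frac{C_\ell}{c_\ell}\e[|X-Y|]+Y\bigr)\bigr]\ge 0$; the inequality $L_t(Y)\le L_t(X)+\frac{C_\ell}{c_\ell}\e[|X-Y|]$ then follows directly from the definition of $L_t(Y)$ as an infimum, and symmetry in $X$ and $Y$ concludes. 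The paper's route is marginally leaner (no case distinction, no first-crossing identity), while yours makes explicit the structural facts about $\phi_X$ (bi-Lipschitz, strictly increasing, divergent at $+\infty$, value at the infimum) that also justify why $L_t$ is well defined; both are valid and give the same bound. Your more detailed verification of $(H_\ell)$, which the paper dismisses as immediate, is also fine.
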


\begin{proof} 
	Observe first that $(H_{b\ell})$ implies directly that $(H_\ell)$ holds. Fix now $t\in[0,T]$ and let $X$ and $Y$ be two random variables in $\lp^2\left(\Fc_T\right)$.
 
 Since $\ell$ is non decreasing, the lower bound of \eqref{bilip} gives
\begin{eqnarray*}
\ell\left(t,L_t(X)+\frac{C_{\ell}}{c_{\ell}}\e\left[|X-Y|\right]+Y\right) &\ge& c_{\ell} \frac{C_{\ell}}{c_{\ell}}\e\left[|X-Y|\right] + \ell(t,L_t(X)+Y),
\end{eqnarray*}
and using the upper bound we get
\begin{equation*}
	\ell(t,L_t(X)+Y) \geq \ell(t,L_t(X)+X) -C_l |X-Y|, 
\end{equation*}
from which it follows
\begin{equation*}
	\ell\left(t,L_t(X)+\frac{C_{\ell}}{c_{\ell}}\e\left[|X-Y|\right]+Y\right) \geq \ell(t,L_t(X)+X) - C_l\, |X-Y| +C_l\, \e\left[|X-Y|\right]
\end{equation*}
Since $\e\left[\ell(t,X+L_t(X))\right]\geq 0$, we obtain by taking the expectation of the previous inequality
\begin{equation*}
	\e\left[\ell\left(t,L_t(X)+\frac{C_{\ell}}{c_{\ell}}\e\left[|X-Y|\right]+Y\right)\right] \geq 0.
\end{equation*}
By definition of $L_t(Y)$, this directly implies that 
$$ 
L_t(Y)\le L_t(X)+\frac{C_{\ell}}{c_{\ell}}\e\left[|X-Y|\right] \;.
$$
By symmetry of $X$ and $Y$, we conclude that
$$|L_t(X)-L_t(Y)|\le \frac{C_{\ell}}{c_{\ell}}\,\e\left[|X-Y|\right].$$
\end{proof}

As a byproduct, we have the following result.

\begin{cor}\label{en:TheMain}
	Let ($H_\xi$), ($H_f$) and ($H_{b\ell}$) hold. 

	Then, there exists a unique deterministic flat solution $(Y,Z,K)\in\Sc^2\times \Hc^2\times\Ac^2_D$ to the BSDE \eqref{eq:main_dyn} with mean reflexion  \eqref{eq:main_constraint}.
\end{cor}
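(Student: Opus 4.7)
The plan is simply to chain together the preceding lemma with Theorem~\ref{thm: main}. The point of Corollary~\ref{en:TheMain} is that the bi-Lipschitz hypothesis $(H_{b\ell})$, which is a cleaner and more easily checkable condition on $\ell$ alone, is in fact strong enough to trigger the general well-posedness result stated under the pair of hypotheses $(H_\ell)$ and $(H_L)$ needed by Theorem~\ref{thm: main}. No new estimate is required; everything has been prepared.

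First, I would verify that $(H_{b\ell})$ implies $(H_\ell)$. This is essentially immediate: the continuity in $t$ together with the Lipschitz property in $y$ from \eqref{bilip} yields joint continuity of $(t,y)\mapsto \ell(t,y)$, strict monotonicity in $y$ is already assumed, the linear growth bound is explicit, and the lower Lipschitz bound $c_\ell|x-y|\leq|\ell(t,x)-\ell(t,y)|$ forces $\ell(t,y)\to +\infty$ as $y\to+\infty$, ensuring $\e[\ell(t,\infty)]>0$. Second, I would invoke the preceding lemma to deduce that $(H_{b\ell})$ also implies $(H_L)$, namely the uniform $\lp^1$-Lipschitz property of the operator $L_t$, with Lipschitz constant $C_\ell/c_\ell$.

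Third, once $(H_\xi)$, $(H_f)$, $(H_\ell)$ and $(H_L)$ are all in force, Theorem~\ref{thm: main} yields the existence and uniqueness of a deterministic flat solution $(Y,Z,K)\in\Sc^2\times\Hc^2\times\Ac^2_D$ to \eqref{eq:main_dyn}--\eqref{eq:main_constraint}, which is exactly the statement of Corollary~\ref{en:TheMain}. There is no genuine obstacle here; the only thing worth being careful about is making sure that the hypotheses of $(H_\ell)$ beyond strict monotonicity (namely continuity, linear growth, and $\e[\ell(t,\infty)]>0$) are all derivable from the items listed in $(H_{b\ell})$, which is routine once the two-sided Lipschitz bound \eqref{bilip} is available.
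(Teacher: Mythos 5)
Your proposal is correct and follows exactly the paper's route: the corollary is stated as an immediate byproduct of the preceding lemma (which establishes that $(H_{b\ell})$ implies both $(H_\ell)$ and $(H_L)$, with Lipschitz constant $C_\ell/c_\ell$ for $L_t$) combined with Theorem~\ref{thm: main}. Your additional remarks checking that the items of $(H_\ell)$ — joint continuity, linear growth, and $\e[\ell(t,\infty)]>0$ via the lower bound in \eqref{bilip} — follow from $(H_{b\ell})$ are precisely the routine verification the paper leaves implicit.
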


%
%
%
%
%
%
%
%
%
%

%
%


\section{Minimality of the deterministic flat solution}
\label{sec:minimal}

 Let us recall that for classical reflected BSDE, the Skorokhod condition ensures the minimality of the enhanced solution in the class of all supersolutions to the reflected BSDE. By minimality, we refer to minimality in terms of the $Y$-component of the solution. The Skorokhod condition indicates that the compensator $K$ only pushes the solution when the condition is binding, i.e. only when it is really necessary. In this spirit, we chose in this paper to look towards solutions to BSDEs with mean reflection which satisfy the corresponding flatness condition \eqref{main_flat}.\\
 
 Now, that the existence of a unique deterministic flat solution to the BSDE \eqref{eq:main_dyn} with mean reflexion  \eqref{eq:main_constraint} has been established, it is natural to wonder if this flatness condition \eqref{main_flat} also implies the minimality  among all the deterministic  solutions. Since the constraint is given in expectation instead of pointwisely, it is not obvious that only the condition at time $t$ determines the minimal upward kick to apply on the solution at time $t$. Under additional assumption on the structure of the driver function $f$, we are able to verify that such minimality property is indeed satisfied.


\begin{thm}\label{minimal}
Suppose that the driver function $f$ is of the form
\begin{equation}\label{struct_f}
f : (t,y,z) \mapsto a_t y+h(t,z) \;, 
\end{equation}
where $a$ is a deterministic and bounded measurable function. If $\ell$ is strictly increasing, a deterministic flat solution $(Y,Z,K)$ is minimal among all the deterministic solutions.
\end{thm}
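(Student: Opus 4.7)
Let $(\tilde Y, \tilde Z, \tilde K)\in\Sc^2\times\Hc^2\times\Ac^2_D$ be any other deterministic solution of \eqref{eq:main_dyn}--\eqref{eq:main_constraint}, and set $\delta Y=Y-\tilde Y$, $\delta Z=Z-\tilde Z$, $\delta K=K-\tilde K$; the goal is to show $\delta Y\le 0$ on $[0,T]$. The strategy is to exploit the structural assumption \eqref{struct_f} to prove first that $\delta Y$ is in fact \emph{deterministic} and continuous, and then to leverage the flatness condition \eqref{main_flat} for $K$ to conclude via a pathwise contradiction argument.

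\textbf{Step 1 (linearization and deterministic representation of $\delta Y$).} Using the Lipschitz continuity of $h$ in $z$, write $h(s,Z_s)-h(s,\tilde Z_s)=\gamma_s\cdot \delta Z_s$ for some progressively measurable $\gamma$ with $|\gamma|\le\lambda$. Introduce the equivalent probability $\Q$ of density $\m E(\int_0^\cdot \gamma\cdot dB)_T$, so that $\tilde B_\cdot=B_\cdot-\int_0^\cdot \gamma_s\,ds$ is a $\Q$-Brownian motion. The difference equation then reads
\[
\delta Y_t=\int_t^T a_s\delta Y_s\,ds-\int_t^T \delta Z_s\cdot d\tilde B_s+\delta K_T-\delta K_t,\qquad \delta Y_T=0.
\]
Applying Itô's formula to $\Gamma_t\delta Y_t$ with the deterministic factor $\Gamma_t=\exp\left(\int_0^t a_s\,ds\right)$ yields
\[
\Gamma_t\delta Y_t=\int_t^T\Gamma_s\,d\delta K_s-\int_t^T\Gamma_s\delta Z_s\cdot d\tilde B_s,
\]
and taking $\e^{\Q}[\,\cdot\,|\m F_t]$ (the stochastic integral being a true $\Q$-martingale thanks to the boundedness of $\gamma$, $a$, and the square-integrability of $\delta Z$) gives
\[
\Gamma_t\delta Y_t=\int_t^T\Gamma_s\,d\delta K_s.
\]
Since $\Gamma$, $K$ and $\tilde K$ are all deterministic, the right-hand side is a deterministic continuous function of $t$; hence so is $\delta Y$, and moreover $d(\Gamma_t\delta Y_t)=-\Gamma_t\,dK_t+\Gamma_t\,d\tilde K_t$.

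\textbf{Step 2 (contradiction via flatness).} Suppose there exists $t_0$ with $\delta Y_{t_0}>0$ and set $t^\star:=\inf\{s\ge t_0:\delta Y_s\le 0\}$. Continuity of $\delta Y$ together with $\delta Y_T=0$ gives $t^\star\in(t_0,T]$, $\delta Y_{t^\star}=0$ and $\delta Y_s>0$ for $s\in[t_0,t^\star)$. On this interval, $\delta Y_s$ is a strictly positive \emph{number} so that $Y_s(\omega)>\tilde Y_s(\omega)$ pointwise, and strict monotonicity of $\ell$ yields $\e[\ell(s,Y_s)]>\e[\ell(s,\tilde Y_s)]\ge 0$. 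The flatness condition \eqref{main_flat} then forces $dK\equiv 0$ on $[t_0,t^\star)$, so that $d(\Gamma_t\delta Y_t)=\Gamma_t\,d\tilde K_t\ge 0$. Consequently $t\mapsto \Gamma_t\delta Y_t$ is non-decreasing on $[t_0,t^\star]$ and
\[
\Gamma_{t^\star}\delta Y_{t^\star}\;\ge\;\Gamma_{t_0}\delta Y_{t_0}\;>\;0,
\]
contradicting $\delta Y_{t^\star}=0$.

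\textbf{Main obstacle.} The heart of the argument is Step 1: the reduction of $\delta Y$ to a deterministic quantity rests crucially on $a$ being deterministic (so that $\Gamma$ is deterministic) and on both $K,\tilde K$ being deterministic (so that $\delta K$ is deterministic); without either feature, the right-hand side of $\Gamma_t\delta Y_t=\int_t^T\Gamma_s\,d\delta K_s$ would remain random and the pathwise monotonicity argument in Step 2 could not be run. Once this deterministic representation is in hand, the flatness-based comparison is elementary and only uses the strict monotonicity of $\ell$ emphasised in the statement.
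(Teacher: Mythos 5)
Your proof is correct, and it follows the same overall strategy as the paper---show that $Y-\tilde Y$ is a deterministic continuous function, then run a first-crossing contradiction combining the flatness of $K$ with the strict monotonicity of $\ell$---but the reduction to a deterministic difference is obtained by a different route. The paper first removes the term $a_ty$ through the exponential change of variables $(e^{A_t}Y_t,e^{A_t}Z_t,e^{A_t}K_t)$ and then, since the transformed driver no longer depends on $y$, observes that $(Y-(K_T-K),Z)$ and $(Y'-(K'_T-K'),Z')$ solve the \emph{same} classical BSDE, so that uniqueness yields the exact identity $Y_t-Y'_t=(K_T-K_t)-(K'_T-K'_t)$; no linearization or change of measure is needed, and the contradiction is then phrased directly on the deterministic compensators ($dK=0$ on the interval forces $K_T-K_{t_2}>K'_T-K'_{t_2}$, contradicting the definition of $t_2$). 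You instead linearize the $z$-dependence via Girsanov and use the integrating factor $\Gamma$, arriving at $\Gamma_t\,\delta Y_t=\int_t^T\Gamma_s\,d(K_s-\tilde K_s)$; this unifies the paper's two steps into one, at the cost of checking that the stochastic integral is a true $\Q$-martingale (which is indeed fine here, since $\gamma$ is bounded and $\delta Z\in\Hc^2$), whereas the paper's argument is more elementary in that it avoids any measure change. Your concluding step---$\e[\ell(s,Y_s)]>\e[\ell(s,\tilde Y_s)]\ge 0$ where $\delta Y>0$, hence $dK=0$ there by \eqref{main_flat}, hence $\Gamma\,\delta Y$ is non-decreasing up to the first zero, contradiction---is essentially the paper's contradiction argument transplanted from the compensators to $\delta Y$, and it is sound.
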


\begin{proof}
Let  $(Y,Z,K)$ be a deterministic flat solution, and $(Y',Z',K')$ be any deterministic solution. We want to prove that $Y\le Y'$. We first focus on the particular case where the driver does not depend on $y$ and then tackle the general case where $f$ is given by \eqref{struct_f}. \\

\noindent {\bf Step 1. Driver of the form $f(t,z)$.} \\
 Since the driver function $f$ does not depend on $y$, the processes $(Y-(K_T-K),Z)$ and $(Y'-(K'_T-K'),Z')$ are both solutions of the same classical BSDE, and we deduce that 
\begin{equation}\label{tmptmp}
Y_t-(K_T-K_t)=Y'_t-(K'_T-K'_t), \qquad 0\le t\le T \;.
\end{equation}
Hereby, proving that $Y\le Y'$ boils down to showing that $K_T-K \le K'_T-K'$. We work towards a contradiction and suppose the existence of $t_1<T$ such that $$K_T-K_{t_1}>K'_T-K'_{t_1}.$$
Let $t_2$ be the first time such that $K_T-K_. \ge K_T'-K'_.$. Obviously $t_2$ is a deterministic time smaller than $T$ and by continuity of $K$ and $K'$, we get $K_T-K_{t_2} = K'_T-K'_{t_2}$ and 
 \begin{equation}
 K_T-K_{t} > K'_T-K'_{t}\;, \qquad t_1\le t < t_2\;.
 \end{equation}
 We deduce from \eqref{tmptmp} that $Y> Y'$, on  $[t_1,t_2)$, and the strict monotony of $\ell$ implies
%
$$\mathbb E[\ell(t,Y_t)]>\mathbb E[\ell(t,Y'_t)]\ge 0\;, \qquad t_1\le t \le t_2 \,.$$
Since $Y$ is a flat solution, we have $\int_0^T \mathbb E[\ell(Y_s)]dK_s=0$ and we deduce that $dK_t=0$, for $t\in [t_1,t_2)$.
Therefore,
$$K_T'-K'_{t_1}<K_T-K_{t_1}=K_T-K_{t_2}=K'_T-K'_{t_2}$$
which is a contradiction since $K'$ must be non decreasing.

\noindent {\bf Step 2. Driver of the form \eqref{struct_f}.} \\
Let us denote $A_t:= \int_0^t a_sds$ for $0\le t \le T$. Making the following transformation 
$$\tilde{Y}_t=e^{A_t}Y_t,\quad \tilde{Z}_t=e^{A_t}Z_t, \quad \tilde{K}_t=e^{A_t}K_t,$$
we verify easily that $(\tilde{Y},\tilde{Z},\tilde{K})$ is a flat deterministic solution to the BSDE with mean reflection associated to the parameters
$$\tilde \xi =  e^{A_T} \xi \;, \qquad \tilde f(t,z)=e^{A_t} f(t,e^{-{A_t}}z) \; \quad \mbox{and}\quad  \tilde \ell(t,y)=\ell(t,e^{-{A_t}}y) \;.$$
According to the previous step $\tilde Y$ is minimal within the class of deterministic solutions, and $Y$  inherits this property by a straightforward argument.
\end{proof}

\begin{rem}
As a by-product, this proof provides an alternative argument in order to derive the uniqueness of the flat deterministic solution of BSDEs with mean reflexion and driver of the form  \eqref{struct_f}. It is in fact a generalization of the proof presented in Proposition \ref{prop:existence_uniqueness_2}  for the constant driver case. \\
\end{rem}


We now exhibit an example to show that if we allow $K$ to be random, then there exists no minimal flat solution to the BSDE with mean reflection. This argument strengthens our choice to focus solely in this paper on so-called deterministic solutions.

  For this purpose, let consider BSDE with mean reflection 
	\begin{gather*}
		Y_t = \xi - \int_t^T \gamma \, ds - \int_t^T Z_s\cdot dB_s + K_T-K_t, \quad 0\leq t\leq T ,\\
		\e[Y_t] \geq u,\quad 0\leq t \leq T, \qquad \int_0^T \left(\e[Y_t]-u\right) dK_t = 0,
	\end{gather*}	
	 with $\gamma>0$,  and the terminal condition $\xi$ such that $u < \e[\xi] < u+\gamma T$.
	 
	 As detailed in Section \ref{sec:llin}, the deterministic flat solution to the BSDE is given by 
	\begin{gather*}
		Y_t = \e\left(\xi\:|\: \m F_t\right) - \gamma (T-t) + {\left(\e[\xi]-\gamma (T-t)-u\right)^-}, 
	\end{gather*}
and $K_t = \gamma (t\wedge t^*)$, where we pick $t^*$ to verify 
$$	\qquad \e[\xi] - \gamma (T-t^*) = u.$$
	 Starting from the previous solution, for $\alpha \in\rset$, we set
	\begin{equation*}
		M^\alpha_t := \exp\left(\alpha B_t - \alpha^2 t/2\right)\quad \mbox{ and }\quad \quad K^\alpha_t := \int_0^t M^\alpha_s\, dK_s \;, \qquad 0\le t \le T\,.
	\end{equation*}
	 Given $K^\alpha$, let $\left(Y^\alpha,Z^\alpha\right)$ be the solution to the classical BSDE
	\begin{equation*}
		Y^\alpha_t = \xi - \int_t^T \gamma \, ds - \int_t^T Z^\alpha_s dB_s + K^\alpha_T - K^\alpha_t, \quad 0\leq t\leq T.
	\end{equation*}
 Then $\left(Y^\alpha,Z^\alpha,K^\alpha\right)$ is still a flat solution to the reflected BSDE,  see Remark \ref{rem_uniqueness} in Section \ref{sec:llin}.\\
		 Let us suppose the existence of a minimal solution $(\bar Y,\bar Z,\bar K)$ and look towards a contradiction.  We have
		\begin{eqnarray*}
			\bar Y_t & \leq & Y^\alpha_t = \e_t(\xi) - \gamma (T-t) +\e_t\left( \int_t^T M^\alpha_s dK_s\right)
			\; = \; \e_t(\xi) - \gamma (T-t) + M^\alpha_t (K_T-K_t)\,,
		\end{eqnarray*}
		for $t>0$. As a byproduct, sending $\alpha$ to $+\infty$, we deduce $\bar Y_t \leq \e_t(\xi)-\gamma(T-t)$ for $t>0$, and in particular
		\begin{equation*}
			\forall t>0, \qquad \e\left[\bar Y_t\right] \leq \e\left[\xi\right] -\gamma (T-t).
		\end{equation*}
		Since $\e\left[\xi\right]-\gamma T < u$, for $t>0$ small enough, $\e\left[\bar Y_t\right]<u$. The constraint is not satisfied and we get a contradiction.

\section{Extension and application}
\label{sec:application}

 Interpreting $Y$ as the value of a portfolio, the constraint \eqref{eq:main_constraint} imposes at any date $t$ a constraint on the distribution of $Y_t$, seen from time $0$. The form of constraint that we considered so far is the expectation of a loss function. From a financial point of view, an investor may be required  to control the risk of any admissible portfolio. In order to measure the underlying risk of a portfolio, the natural tool in the mathematical finance literature are the so-called  risk measures, see e.g. \cite{ADEH}. We emphasize in this section how our framework of study allows to encompass such type of running static risk measure constraint. Then, we present an application for the problem of super hedging a claim under a given running risk measure constraint. \\

\subsection{ BSDE with risk measure reflection} \label{subsec_risk}

 For a fixed $t$, a static risk measure is a map   $\rho(t,.) : L^2(\m F_t) \fl \rset$ satisfying $\rho(t,0)=0$ together with
	\begin{itemize}
		\item Monotonicity:  $X\leq Y \Longrightarrow \rho(t,X) \geq \rho(t,Y)$, for $X,Y\in L^2(\m F_t)$ \,;
		\item Translation invariance: $\rho(t,X+m) = \rho(t,X) - m$,   for $X\in L^2(\m F_t)$ and $m\in\R$ \,.
	\end{itemize}
Hereby, for a given $t\in[0,T]$, $\rho(t,X)$ is a real number which measures the risk associated to the wealth random variable $X$. Risk measures can similarly be characterized by their so-called acceptance set, which defines as 
 \begin{equation*}
	\mathcal{A}_\rho^t = \{ X\in L^2(\m F_t) : \rho(t,X) \leq 0 \}.
 \end{equation*}
 Similarly,  given a set $\mathcal{A}^t$, one can define a static risk measure by setting
\begin{equation*}
	\rho(t,X) = \inf\{ m\in\rset : m+X \in \mathcal{A}^t \},
\end{equation*}
so that the acceptance set  $\mathcal{A}^t$  and the risk measure $\rho(t,.)$ share a one to one correspondence.  For a given collection of static risk measures $(\rho(t,.))_t$, a wealth process $Y$ will be considered admissible in our framework as soon as it satisfies 
 \begin{eqnarray}\label{constraint_riskmeasure} 
 \rho(t,Y_t) \le q_t\;, \qquad 0\le t \le T\;,
 \end{eqnarray}
 where $q$ is a given time indexed deterministic benchmark. For example, the risk measuring tool of $\rho$ could simply not depend on time, but be compared to the deterministic benchmark $q$, which evolves with time, by either tightening or relaxing  the constraint.  We now look towards solutions of BSDEs subject to the additional constraint \eqref{constraint_riskmeasure}. In the same spirit as above, a flat solution to such type of BSDE will be required to satisfy 
 \begin{eqnarray}\label{flatness_riskmeasure} 
 \int_0^T [q_t-\rho(t,Y_t)] dK_t=0 \;.
 \end{eqnarray}

 The next theorem indicates that we are able to consider BSDEs under risk measure constraint of the form \eqref{constraint_riskmeasure}, in a similar fashion as the one developed in the previous sections.

		\begin{thm}
			Let $\rho(t,.) : [0,T]\times\lp^2\fl \rset$ be a collection of monotonic and translation invariant risk measures, which are continuous with time and Lipschitz in space, i.e.
			\begin{eqnarray*}
			|\rho(t,X)-\rho(t,Y)|\le C \e [|X-Y|] \;, \qquad 0\le t \le T\;, \;\; X,Y \in \lp^2\left(\Fc_t\right)\;.
			\end{eqnarray*}
			If we are moreover given a  continuous deterministic benchmark $q$ and $\xi$ satisfies $\rho(T,\xi)\le q_T$, 
			 then the "\textit{BSDE with risk measure reflection}" 
		\begin{gather*}
			Y_t = \xi + \int_t^T f(s,Y_s,Z_s)\, ds - \int_t^T Z_s\cdot dB_s + K_T-K_t, \quad 0\leq t\leq T \\
			\rho(t,Y_t)\leq q_t,\quad 0\leq t \leq T, \qquad \int_0^T [q_t-\rho(t,Y_t)] dK_t = 0.
		\end{gather*}			
			admits a unique deterministic flat solution.\\ Besides, if $f$ satisfies \eqref{struct_f}, the deterministic flat solution is minimal among all deterministic solutions.
		\end{thm}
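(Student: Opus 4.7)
The plan is to reduce this result to the general framework of Theorem~\ref{thm: main} and Theorem~\ref{minimal}, by introducing an operator $L_t$ tailored to the risk-measure setting and by substituting translation invariance for the strict monotonicity of $\ell$ exploited in Sections~\ref{sec:general_case} and~\ref{sec:minimal}. Specifically, I would define
\begin{equation*}
L_t(X) \; := \; \inf\{x\geq 0 \, : \, \rho(t, x+X) \leq q_t\}, \qquad X\in\lp^2(\m F_t).
\end{equation*}
By translation invariance of $\rho$, this admits the closed-form expression $L_t(X) = (\rho(t,X)-q_t)^+$, and the assumed Lipschitz property of $\rho$ in the $\lp^1$-norm transfers directly to $L_t$ with the same constant. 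The constraint $\rho(t,Y_t)\leq q_t$ is then equivalent to $L_t(Y_t)=0$, so this framework is perfectly analogous to the one of Section~\ref{sec:general_case}.

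For existence, I would adapt the constructive proof of Proposition~\ref{prop:existence_uniqueness_2}: in the constant-driver case, set $X_t=\e_t(\xi+\int_t^T C_s\,ds)$, $\Psi_t = L_t(X_t) = (\rho(t,X_t)-q_t)^+$, and $K_T-K_t=\sup_{t\leq s\leq T}\Psi_s$. Continuity of $\Psi$ follows from the time-continuity and space-Lipschitz assumptions on $\rho$ combined with continuity of $t\mapsto X_t$ in $\lp^2$. Translation invariance then yields $\rho(t,Y_t) = \rho(t,X_t)-(K_T-K_t)\leq \rho(t,X_t)-\Psi_t \leq q_t$, and flatness holds because at any point where $K$ increases one has $\Psi_t>0$ and hence $\rho(t,Y_t)=q_t$. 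The extension to general Lipschitz drivers $f$ is then handled by the same Picard contraction plus small-interval pasting argument as in Theorem~\ref{thm: main}, which relies only on the Lipschitz property of $L_t$ and the representation $R_t=\sup_{t\leq s\leq\tau}L_s(X_s)$.

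For uniqueness and minimality, the crucial observation is that the difference of two deterministic solutions is itself deterministic, so translation invariance can be applied directly: if $(Y,Z,K)$ is flat and $(Y',Z',K')$ is any deterministic solution with driver of the form $f(t,z)$, then $Y_t-Y'_t = (K_T-K_t)-(K'_T-K'_t)$ is deterministic, and
\begin{equation*}
\rho(t,Y_t) \; = \; \rho(t,Y'_t) \; - \; \bigl[(K_T-K_t)-(K'_T-K'_t)\bigr].
\end{equation*}
Running the contradiction argument of Theorem~\ref{minimal}, if $K_T-K_{t_1}>K'_T-K'_{t_1}$ at some $t_1$ and $t_2$ is the first subsequent time of equality, then on $[t_1,t_2)$ one has $q_t-\rho(t,Y_t)>q_t-\rho(t,Y'_t)\geq 0$ strictly; flatness of $K$ forces $dK=0$ on this interval, contradicting the definition of $t_2$. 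Applied with $(Y',Z',K')$ a second flat solution this yields uniqueness, and applied to an arbitrary deterministic solution it yields minimality in the $f(t,z)$ case. The general driver~\eqref{struct_f} case then reduces to the previous one via the exponential change of variables $\tilde Y_t=e^{A_t}Y_t$ with $A_t=\int_0^t a_s\,ds$: the transformed risk measure $\tilde\rho(t,X):=e^{A_t}\rho(t,e^{-A_t}X)$ and benchmark $\tilde q_t:=e^{A_t}q_t$ inherit monotonicity, translation invariance and $\lp^1$-Lipschitz continuity by a routine verification. The step I would expect to be most delicate is precisely this systematic use of translation invariance to replace the strict monotonicity of $\ell$ that appeared in Sections~\ref{sec:general_case} and~\ref{sec:minimal}, ensuring that no structural assumption on $\rho$ beyond the axioms of a monotone, translation-invariant, $\lp^1$-Lipschitz risk measure is required.
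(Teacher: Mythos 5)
Your proposal is correct and matches the paper's own argument, which is precisely to rerun Proposition~\ref{prop:existence_uniqueness_2}, Theorem~\ref{thm: main} and Theorem~\ref{minimal} with the operator $L_t$ replaced by $\left(\rho(t,\cdot)-q_t\right)^+$ and with translation invariance standing in for the strict monotonicity of $\ell$. Your write-up in fact fills in the details (closed form of $L_t$, transfer of the Lipschitz property, the exponential change of variables for drivers of the form \eqref{struct_f}) that the paper only sketches.
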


 \begin{proof}
 The reasoning  simply follows the arguments of Proposition \ref {prop:existence_uniqueness_2},  Theorem \ref{thm: main} and Theorem \ref{minimal}. The main distinction is that the map $L_t$ is replaced by the risk measure $\rho(t,.)-q_t$, for any $t\in[0,T]$. Besides, the translation invariance property conveniently replaces the strict monotonicity of $\ell$ in the proofs. 
 \end{proof}


Typical examples considered in the literature are coherent risk measures of the form 
			\begin{equation*}
				\rho(t,X) = \sup\{ \e^{\mathbb{Q}}\left[-X\right] : \mathbb{Q} \in \m Q_t\} \;, 
			\end{equation*}
where $\m Q_t$ is a set of probabilities absolutely continuous w.r.t. $\p$. As soon as the set of probability change densities is bounded, $\rho(t,)$ is Lipschitz. This is particular the case for the classical Expected Shortfall risk measure, defined as 
			\begin{eqnarray*}
				\rho^{ES}_\alpha(t,X) := \frac{1}{\alpha_t} \int_0^{\alpha_t} VaR_{s}(X) ds\;,   
			\end{eqnarray*}
			where $\alpha_t\in(0,1)$ denotes a given precision level and $VaR_s$ is the Value at Risk of level $s$. Indeed, the Expected Shortfall (or AVaR) rewrites also this way
%
			\begin{equation*}
				\rho^{ES}_\alpha(t,X) = \sup\left\{ \e^{\mathbb{Q}}\left[-X\right] :  \frac{d\mathbb{Q}}{d\p} \leq \frac{1}{\alpha_t} \right\} \;.
			\end{equation*}

\subsection{Application to super hedging under risk constraint} \label{subsec_application}   

 We now turn to an application in mathematical finance and consider a stock market endowed with a Bond with deterministic interest rate $r$ and a vector of $d$ stocks with dynamics
		\begin{equation*}
			d S_t \,=\, S_t \left( \mu_t dt + \sigma_t dB_t  \right)\;, \qquad 0\le t \le T\;,
		\end{equation*}
  where the drift $\mu$ and the volatility $\sigma$ are square integrable predictable processes. We assume that $\sigma_t\sigma_t'-\varepsilon I\succeq 0$
for some $\varepsilon>0$, in order to ensure the completeness of the market. 
For a given initial capital $x$, we consider portfolios $X^{x,\pi,K}$ driven by a consumption-investment strategy $(\pi,K)$, and whose dynamics are given by 
\begin{eqnarray*}
dX^{x,\pi,K}_t &=& X^{x,\pi,K}_t \left(r_t dt+ (\mu_t-r_t{\bf 1}) '\pi_t \frac{d S_t}{S_t} \right)  - dK_t \;, \\
 &=& r_t X^{x,\pi,K}_t dt + (\mu_t-r_t{\bf 1})'\pi_tdt+\pi_t'\sigma_t dB_t-dK_t\,,\qquad 0\le t \le T \,.
\end{eqnarray*}
 Using such portfolios, a financial engineer is willing to hedge a possibly non Markovian claim $\xi\in L^2(\Fc_T)$. For regulatory purposes, the  risk management department of his financial institution imposes him restrictions on the class of admissible investment strategies. Namely, a portfolio wealth process $X^{x,\pi,K}$ is considered admissible if and only if it satisfies  the following constraint : 
			\begin{equation*}
				\rho^{ES}_\alpha(t,X^{x,\pi,K}_t) \le  q_t \;,\qquad  0\le t \le T\;,
			\end{equation*}
 where $(\alpha,q)$ are a time indexed collection of deterministic quantile and level benchmarks. These benchmarks can for example be chosen in such a way that the constraint becomes either tighter or weaker, as we approach the maturity $T$. In such a case, the careful investor is looking for the super hedging price 
 \begin{equation*}
 Y_0 = \inf\{ x\in\R \;, \quad  \exists (\pi,K)\in\Ac\,, \quad s.t. \;\;  X^{x,\pi,K}_T\ge \xi  \;\;\;\; \mbox{ and } \;\;\;\; \rho^{ES}_\alpha(t,X_t) \le  q_t \;,\quad \forall \;\; t\in[0,T]   \} \;,
 \end{equation*}
 and associated consumption-investment strategy. Applying the results of this paper, we deduce that, if the investor restricts to deterministic consumption strategies, $Y_0$ is well defined as the starting point of the unique deterministic flat solution to the following BSDE with risk measure reflection 
 \begin{eqnarray*}
			Y_t = \xi + \int_t^T  r_t X^{x,\pi,K}_t dt + (\mu_t-r_t{\bf 1})' \sigma_t^{-1} Z_t^\top \, ds - \int_t^T Z_s\cdot dB_s + K_T-K_t, \quad 0\leq t\leq T, \\
			\rho^{ES}_\alpha(t,Y_t)\leq q_t,\quad 0\leq t \leq T, \qquad \int_0^T [q_t-\rho^{ES}_\alpha(t,Y_t)] dK_t = 0.
\end{eqnarray*}
 Indeed, the driver function satisfies \eqref{struct_f}, so that the flat solution is minimal among all deterministic ones. 

\end{document}